\documentclass[12pt, reqno, a4paper]{amsart}
\usepackage{amsmath,mathrsfs}
\usepackage{amssymb}
\usepackage{color}
\usepackage{calligra}
\usepackage{dsfont}
\usepackage{pstricks,pst-node}
\usepackage[latin1]{inputenc}
\usepackage{bbm}
\usepackage{enumitem}

\newcommand\NoBlackBoxes{\global\overfullrule0pt}
\NoBlackBoxes

\setlength{\marginparwidth}{1.5cm}

\newcommand{\N}{\mathbb{N}}

\makeatletter
\let\serieslogo@\relax
\let\@setcopyright\relax


\parindent0em
\textwidth14cm

\textheight22cm

\topmargin-1.2cm
\parindent 0cm

\newtheorem{definition}{Definition}[section]
\newtheorem{theorem}[definition]{Theorem}
\newtheorem{proposition}[definition]{Proposition}
\newtheorem{lemma}[definition]{Lemma}
\newtheorem{rem}[definition]{Remark}
\newtheorem{rems}[definition]{Remarks}

\newenvironment{remark}[1][Remark]{\begin{trivlist}
\item[\hskip \labelsep {\bfseries #1}]}{\end{trivlist}}

\renewcommand{\P}{{\mathbb{P}}}
\newcommand{\E}{{\mathbb{E}}}
\newcommand{\V}{{\mathbb{V}}}
\newcommand{\R}{{\mathbb{R}}}

\newcommand{\Fc}{{\mathcal{F}}}

\newcommand{\Gc}{{\mathcal{G}}}
\newcommand{\Oc}{{\mathcal{O}}}
\newcommand{\Cc}{{\mathcal{C}}}

\renewcommand{\l}{\lambda}

\newcommand{\Var}{\mathbb{V}}

\renewcommand{\a}{\alpha}

\renewcommand{\epsilon}{\varepsilon}
\renewcommand{\phi}{\varphi}
\newcommand{\Cmax}{C_{\max}}
\newcommand{\Ckn}{C_{\leq k_N}}
\newcommand{\Ckna}{|C_{\leq k_N}|}

\newcommand{\Cs}{C_{second}}
\renewcommand{\b}{\beta}

\numberwithin{equation}{section}


\begin{document}

\setcounter{page}{1}

\title[Moderate deviations for random hypergraphs]{Moderate deviations for the size of the giant component in a random hypergraph}
%
%

\author[Jingjia Liu]{Jingjia Liu}
\address[Jingjia Liu]{Fachbereich Mathematik und Informatik,
Universit\"at M\"unster,
Einsteinstra\ss e 62,
48149 M\"unster,
Germany}

\email[Jingjia Liu]{jingjia.liu@uni-muenster.de}

\author[Matthias L\"owe]{Matthias L\"owe}
\address[Matthias L\"owe]{Fachbereich Mathematik und Informatik,
Universit\"at M\"unster,
Einsteinstra\ss e 62,
48149 M\"unster,
Germany}

\email[Matthias L\"owe]{maloewe@uni-muenster.de}


\date{\today}

\subjclass[2010]{Primary: 60F10; Secondary: 05C65.}

\keywords{Large deviations, moderate deviations, random hypergraph, giant component, Erd\"os-R\'enyi graph}

\newcommand{\wlim}{\mathop{\hbox{\rm w-lim}}}
\newcommand{\na}{{\mathbb N}}
\newcommand{\re}{{\mathbb R}}

\newcommand{\vep}{\varepsilon}

\begin{abstract}
We prove a moderate deviations principles for the size of the largest connected component in a random $d$-uniform hypergraph. The key tool is a version of the exploration process, that is also used to investigate the giant component of an Erd\"os-R\'enyi graph, a moderate deviations principle for the martingale associated with this exploration process, and exponential estimates.
\end{abstract}

\maketitle

\section{Introduction}
The research on random graphs was initiated by Erd\"os and R\'enyi, see \cite{ER59}, \cite{ER61}. Though it was originally motivated by questions from graph theory, random graphs quickly developed into an independent field with applications in many areas such as physics, neural networks, telecommunications, or the social sciences. Despite the fact that some of these applications ask for random graphs with a given degree distribution (see e.g. \cite{durrett_graphs}, \cite{vdH} for very readable surveys, or \cite{LV14}, for a recent application), the by far most popular model of a random graph still is the Erd\"os-R\'enyi graph. In this graph, one realizes all possible connections between $N$ vertices $V=\{1, \ldots, N\}$  independently with equal probability $p$. This models is referred to as $\Gc(N,p)$.

The corresponding random hypergraph model is the model $\Gc^d(N,p)$. Here $d \ge 2$ is an integer number that denotes the cardinality of the hyperedges (the case $d=2$ is nothing but the ordinary Erd\"os-R\'enyi graph). Thus a realization of $\Gc^d(N,p)$  will be a hypergraph $G=(V,E)$, where all the edges in $E$ are subsets of $V$ with cardinality $d$. Moreover, in $\Gc^d(N,p)$ all hyperedges of cardinality $d$ are selected independently with probability $p$. One of the most striking first results about $\Gc(N,p)$ is, that there is a sharp phase transition in the size of the largest connected component: If $p= \l/N$ and $\l <1$, then the largest component will be of size $\mathcal{O}(\log N)$, while for $\l >1$, the component is of order $\mathcal{O}(N)$, both with probability converging to 1. In the latter case, the size of the largest component with high probability is of order $\rho_\l N + o(N)$, where $\rho_\l$ satisfies
\begin{equation} \label{equa_rho2}
1-\rho_\l = e^{-\l\rho_\l}
\end{equation}
(here we say that
the random (hyper)graph $\Gc^d(N, p)$ enjoys a certain property $A$ with high probability (w.h.p.),
if the probability that A holds in $\Gc^d(N, p)$ converges to 1 as $N$ tends to infinity).
A very detailed study of this and many other phenomena concerning this phase transition can be found in \cite{giant_component} or \cite{vdH}. 
The corresponding result for the d-regular random hypergraph models $\Gc^d(N,p)$ were shown in \cite{S-PS85}, \cite{KL02}, and \cite{C-OMS07}: If for some $\vep>0$ we have $(d-1) \binom{N-1}{d-1} p < 1-\vep$ the resulting hypergraph consist of components of order $\mathcal{O}(\log N)$, while for  $(d-1) \binom{N-1}{d-1} p > 1+\vep$ there is a unique giant component of  size $\mathcal{O}(N)$. To make this more precise, we need a number of definitions. We set
\begin{equation}\label{pdef}
p=\frac{\l (d-2)!}{N^{d-1}}.
\end{equation}

For each fixed $\l>1$, we define the dual branching process parameter $\l_*<1$  by the equation
\begin{equation*}
\l_* e^{-\l_*}= \l e^\l.
\end{equation*}
In case $d=2$, we specify $\rho_\l$ given by \eqref{equa_rho2} as $\rho_\l=:\rho_{2,\l}$, whereas for $d\geq 3$, we define $\rho_{d,\l}$ by the equation
\begin{equation}\label{equa_rho2rhoD}
1-\rho_{d,\l}=\left(1-\rho_\l\right)^{1/(d-1)}.
\end{equation}
It can be checked that $\rho_{d,\l}$ satisfies
\begin{equation}\label{def_lambda*}
\l_*= \l \left( 1- \rho_{d,\l} \right)^{d-1}.
\end{equation}

For fixed $d$ we abbreviate $\overline{\rho_\l}=\rho_{d,\l}$.
The role of $\overline{\rho_\l}$ is that it determines the asymptotic size of the giant component. Indeed, 
if $\l > 1$, it has been shown in \cite{ER59} and \cite{C-OMS07} that the unique giant component is of size $\overline{\rho_\l}N+o(N)$ with high probability.
This statement can be regarded as a law of large numbers for the size of the giant component, which we will call henceforth $C_{\max}$. 

Moreover it was shown that and $\overline{\rho_\l}$ also can be written as the unique solution of the transcendental equation (cf.\ \cite{BC-OK10}):
\begin{equation}\label{rho_alt}
1-\overline{\rho_\l} = \exp\left(-\frac{\lambda}{d-1}\left(1-(1-\overline{\rho_{\l}})^{d-1}\right)\right).
\end{equation}
Combining \eqref{equa_rho2rhoD} with \eqref{def_lambda*}, one sees that \eqref{equa_rho2} is indeed the case $d=2$ of \eqref{rho_alt}.
Note that both $\rho_\l$ and $\overline{\rho_\l}$ depend on $d$, which is suppressed in the notation.

Let us assume for the rest of the paper that we are in the supercritical regime, i.e.\ 
$$
(d-1) \binom{N-1}{d-1} p > 1+\vep \quad\mbox{ for some $\vep>0$},
$$
where the precise conditions on $\epsilon$ will be given later explicitly. Note that this is equivalent to assuming that $\lambda >1+\epsilon$.


For both, random graphs and random hypergraphs, fluctuations around the aforementioned law of large numbers for the size of $\Cmax$ were investigated. A Central Limit Theorem (CLT, for short) for the size of $\Cmax$ in $\Gc(N,p)$ was proved e.g. in \cite{BBF00}, for a nice proof we also refer to \cite{vdH}, Section 4.5. Large deviations in the same situations go back to O'Connell in his nice paper \cite{NOC98}, while moderate deviations were investigated in \cite{AL11}. The corresponding CLT for $\Cmax$ is $\Gc^d(N,p)$ was established in \cite{BC-OK10} using Stein's method. In \cite{BC-OK14} a local CLT is proved, even for the joint distribution of the number of vertices and edges in $\Cmax$. Another way to prove a CLT that uses the so-called exploration process and is reminiscent to the proof for random graphs given in \cite{vdH}  was introduced by Grimmett and Riordan \cite{bollobas_riordan2}.

The aim of the present paper is  to establish moderate deviations results for the number of vertices in $\Cmax$ for the case of the random hypergraph model $\Gc^d(N,p)$. To this end, we will modify the exploration process for hypergraphs introduced in \cite{bollobas_riordan2} in such a way that is resembles the exploration process used in \cite{AL11}.

In order to formulate our main theorems we need to recall that a sequence of real valued random variables
$(Y_n)$
obeys a large deviation principle (LDP) with speed $a_n$ and good
rate function $I(\cdot):\R\to \R^+_0\cup \{+\infty\}$ if
\begin{itemize}
\item For every $L \in  \R^+_0$, the level sets of $I$ denoted by  
$N_L:=\{x\in \R: I(x) \le L\}$, are compact
\item
For every open set $G \subseteq \R$ it holds
\begin{equation}
\liminf_{n \to \infty} \frac 1 {a_n} \log \P(Y_n \in G)\ge -\inf_{x\in G} I(x).
\end{equation}
\item
For every closed set $ A\subseteq \R$ it holds
\begin{equation}
\limsup_{n \to \infty} \frac 1 {a_n} \log \P(Y_n \in A)\le -\inf_{x\in A} I(x).
\end{equation}
\end{itemize}
As announced, in this paper we will prove a moderate deviation principle (MDP) for $|\Cmax|$ (which is a function of $N$). Formally, there is no distinction between an MDP and an LDP.
Usually, an LDP lives on the scale of a law of large number type
ergodic phenomenon, while MDPs describe the probabilities on a scale
between a law of large numbers and some
sort of CLT. For both, large deviation principles and MDPs
the three points mentioned above serve as a definition.

Having this in mind our central result reads as: 
\begin{theorem}[MDP for the size of the giant component in $\Gc^d(N,p)$]\label{main_theorem}
Let $\frac 12 <\alpha < 1$. For each $d \ge 3$ , set $p = \l \frac{(d-2)!}{N^{d-1}}$ with $\l=1+\epsilon$. Assume that $\epsilon=\mathcal{O}(1)$, 
as well there exists $\iota>0$ such that $ \epsilon^3 N^\tau\to\infty$ where 
\begin{equation}\label{cond:tau}
\tau=\min\lbrace \frac{1}{2},\, 2-2\a-\iota \rbrace.
\end{equation}

Then the sequence of random variables $(|\Cmax|-\overline{\rho_\l}N)/N^\a$ satisfies an MDP in $\Gc^d(N,p)$ with speed $N^{2\a-1}$ and rate function 
\begin{equation}\label{rategraph}
J(x)=\frac{x^2\left( 1-\l_*\right)^2}{2c}.
\end{equation}
Here $c=  \l (1-\overline{\rho_\l})^2-\l_* (1-\overline{\rho_\l})+\overline{\rho_\l}(1-\overline{\rho_\l})$ and 
$\l_*= \l \left( 1- \rho_{d,\l} \right)^{d-1}$ as in \eqref{def_lambda*}.
\end{theorem}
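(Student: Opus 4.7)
The plan is to imitate, for general $d$, the strategy used in \cite{AL11} for ordinary Erd\H{o}s--R\'enyi graphs, with the role of the standard exploration process played by the hypergraph exploration process of Bollob\'as and Riordan \cite{bollobas_riordan2}, modified so that it admits a clean martingale decomposition. Concretely, I would run the exploration one vertex at a time: at step $t$ we activate a new vertex, query the set of potential hyperedges $\{v\}\cup S$ with $S\subset \text{unexplored}$ of size $d-1$ that contain it, and update the ``active stack'' accordingly. Writing $Y_t$ for the net change in the active stack after step $t$, the process $S_t=\sum_{s\le t}Y_s$ has the property that $|\Cmax|$ is (up to a negligible error) the first hitting time $T$ at which $S_t$ returns to zero after having made its large positive excursion, so $|\Cmax| = \bar\rho_\lambda N + (T-\bar\rho_\lambda N)$.

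The next step is the Doob decomposition $S_t = A_t + M_t$, where the predictable part $A_t$ is, up to lower-order terms, a deterministic function $Nf(t/N)$ with $f(x)=1-(1-x)^{d-1}\lambda/(d-1)\cdot(\text{suitable factor})-x$ derived from \eqref{rho_alt}. By construction $f(\bar\rho_\lambda)=0$ and $f'(\bar\rho_\lambda)=\lambda_*-1<0$, which is where the prefactor $(1-\lambda_*)^2$ in $J$ enters. The quadratic variation of the martingale increments, evaluated at $t=\bar\rho_\lambda N$, gives precisely $c = \lambda(1-\bar\rho_\lambda)^2 - \lambda_*(1-\bar\rho_\lambda) + \bar\rho_\lambda(1-\bar\rho_\lambda)$. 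Thus the rate function in \eqref{rategraph} should arise from the MDP for $M_{\lfloor\bar\rho_\lambda N\rfloor}/N^\alpha$ via the implicit-function relation $T-\bar\rho_\lambda N\approx -M_{\lfloor\bar\rho_\lambda N\rfloor}/f'(\bar\rho_\lambda) = M_{\lfloor\bar\rho_\lambda N\rfloor}/(1-\lambda_*)$.

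The technical heart is therefore an MDP at speed $N^{2\alpha-1}$ for the martingale $M_{\lfloor\bar\rho_\lambda N\rfloor}/N^\alpha$ with variance $cN$. Since $M$ has bounded (in fact $O(1)$) increments after a suitable truncation of vertices of exceptionally large exploration degree, I would apply a Dembo--Zeitouni / Puhalskii type MDP for martingales with bounded jumps and predictable quadratic variation converging to $c\cdot \bar\rho_\lambda$ in probability at the required speed. The conditions on $\epsilon$ and $\tau$ in \eqref{cond:tau} should enter exactly here, to ensure that (i) the replacement of the combinatorial drift by its continuous approximation $Nf(t/N)$ is exponentially negligible at speed $N^{2\alpha-1}$, and (ii) the quadratic variation concentrates at the same speed.

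Finally, to pass from the martingale MDP to the statement for $|\Cmax|$, I would prove two exponential-equivalence lemmas: first, an exponential estimate showing that the exploration hits zero in a window of size $N^\alpha$ around $\bar\rho_\lambda N$ outside of a set of probability $\exp(-\omega(N^{2\alpha-1}))$, controlled by classical giant-component concentration results together with a Freedman-type inequality for $M$; and second, a contraction/delta-method argument that replaces $T-\bar\rho_\lambda N$ by $M_{\lfloor\bar\rho_\lambda N\rfloor}/(1-\lambda_*)$ up to exponentially small error at the MDP scale. The main obstacle I expect is step (i)--(ii) above, namely controlling the drift and bracket of $M$ uniformly on the relevant $N^\alpha$-window while keeping every error truly sub-exponential at the delicate speed $N^{2\alpha-1}$; this is where the upper bound $\tau\le 2-2\alpha-\iota$ in \eqref{cond:tau} is almost certainly sharp and must be verified by a careful Taylor expansion of the hypergraph drift function around $\bar\rho_\lambda$.
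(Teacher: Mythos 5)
Your overall architecture coincides with the paper's: an exploration process decomposed into a deterministic drift plus a martingale, linearization of the drift at $\overline{\rho_\l}$ (whence the factor $1-\l_*$, since the drift function $g_{d,\l}$ vanishes at $\overline{\rho_\l}$ with derivative $\l_*-1$), an MDP for the martingale near time $\overline{\rho_\l}N$ with variance constant $c$, and a transfer to the hitting time of zero. However, the step you yourself call the technical heart is a genuine gap as stated: you propose to quote a Dembo--Zeitouni/Puhalskii-type MDP for martingales with bounded jumps, and this is exactly what the paper says cannot be done here, because the martingale differences form a triangular array whose distributions depend on $N$ and, more seriously for $d\ge 3$, each increment $\eta_t$ is a sum of \emph{dependent} indicators (one hyperedge can activate several unseen vertices at once), so neither bounded jumps nor the usual hypotheses on the predictable bracket are available off the shelf. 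The paper instead proves the martingale MDP by hand via the G\"artner--Ellis theorem, Taylor-expanding the conditional moment generating function to third order on a concentration event for the number of unseen vertices; the required third-moment bound \eqref{bern_cond} rests on a combinatorial estimate for the probability that $k$ prescribed unseen vertices are activated simultaneously, which your ``truncation of exceptionally large exploration degrees'' does not replace and which you would have to supply.

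A second, smaller gap: starting from a single vertex and defining $|\Cmax|$ as the return time ``after the large positive excursion'' leaves unresolved which excursion is the giant one and how much time is consumed by the small components explored before it. The paper sidesteps this by initializing the exploration with $k_N=N^\gamma$ ($2\a-1<\gamma<\a$) active vertices, showing that $\Ckn$ then contains $\Cmax$ up to probability $\exp(-Ck_N)$, and proving $\big||\Cmax|-\Ckna\big|\le r_N=o(N^\a)$ outside an event negligible at speed $N^{2\a-1}$ via the subcritical bound on the second-largest component. Note also that the condition $\epsilon^3N^{2-2\a-\iota}\to\infty$ is not consumed where you place it (drift approximation and bracket concentration) but in the lower-tail upper bound (Lemma \ref{lemma5.4}), to rule out that the exploration dies well before $\overline{\rho_\l}N$. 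With these pieces filled in, your plan reproduces the paper's proof.
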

\begin{rems}
\normalfont{
\begin{enumerate}
\item For $d=2$ this result is contained in \cite{AL11}.
\item The asymptotic notation should be understood as $N\to\infty$. We use $X=\mathcal{O}(Y)$, if there is an $ M>0$ such that $\limsup\limits_{N\to\infty}|\frac{X}{Y}|\leq M$, and $X=o(Z)$, if there exists $c(N)$ such that $X\leq c(N)Z$, where $c(N)\to 0$ as $N\to\infty$. Furthermore, if such quantities $M, c(N)$ depend on some parameters, we will indicate this by subscripts, e.g.\ $X=\mathcal{O}_\rho(Y)$ meaning $M=M(\rho)$.
\item From the proof of Lemma \ref{lem_errorESTrn} one might get the impression that requiring the (slightly more natural) condition
$ \epsilon^3 N^\tau\to\infty$ with $\tau=\min\lbrace \frac{1}{2},\, 2-2\a \rbrace$ would be enough. However, in the proof of Lemma \ref{lemma5.4} we need the slightly stronger condition \eqref{cond:tau}.
\end{enumerate}}
\end{rems}

The rest of this paper is organized as follows: In Section 2 we give a short introduction to the exploration process which will be used in Sections 4, 5 and 6 to prove Theorem \ref{main_theorem}.  Briefly this exploration process starts with a number $k=k_N$ of vertices and investigates the union of its connected components. If $k_N$ is chosen appropriately, this union coincides with the giant component of the hypergraph up to negligible terms. On the other hand, the size of this union can be controlled by a martingale underlying the exploration process. In Section 3 we prove an MDP for this martingale. In Sections 4,5, and 6 we will see that indeed this MDP helps to show our main Theorem \ref{main_theorem}.

\section{An exploration process on hypergraphs}
The aim of this section is to introduce an exploration process to investigate the components of a hypergraph. This exploration process is inspired by the corresponding process for graphs as defined e.g.\ in \cite{vdH}. A similar, yet slightly different process for hypergraphs was introduced in \cite{bollobas_riordan2}. We will also use results from this paper. 

We start by taking the given enumeration of the vertices from $1, \ldots, N$. Vertices during this exploration process will get one of three labels: active, unseen, or explored. At time $t$ the sets of active, unseen, or explored vertices will be denoted by $A_t$, $U_t$, and $E_t$, respectively. We will start by declaring the first $k=k_N$ vertices active and the rest unseen. Now, in each step of the process, the first active vertex (with respect to the given enumeration) is selected and declared \textit{explored}. At the same time, all of its unseen neighbors are set active. The process terminates when there are no active vertices.
If we denote by $C_{\leq k}$ the union of the connected components of the first $k$ vertices, then, at the end of the process
all vertices in $C_{\leq k}$  are explored and all the others are unseen.
We remark the following

\begin{rems}\label{r:exp:CkSt}
\normalfont{
\begin{enumerate}
\item For two sequences $X_N$ and $Y_N$, we write $X_N\sim Y_N$, if the limit $\lim_{N\to \infty} X_N/Y_N$ exists and equals to 1.
\item Obviously, since we add an explored vertex in every step
\[ |C_{\leq k_N}| = \min\{ t\in\N : A_t=0\}.\]

\item By construction, $A_0 = k_N$  (to be specified later) and, for all $t$ with $A_{t-1} > 0$, one has
$$
A_t = A_{t-1} + \eta_t -1.
$$
Here $\eta_t$ is the number of unseen vertices that are set active in the $t-$th step and
$A_t = 0$ if $A_{t-1}=0$.
\item Consider the distribution of $A_t$ when we are investigating $\Gc^d(N,p)$ with $p = \l \frac{(d-2)!}{N^{d-1}}$ and $\l>1$. For each $t\in\N$ after the $t-$th step, with $s$ active and $N-t-s$ unseen vertices $u$, for each unseen vertex $u$ there are exactly
$$
\nu_{t+1} = \binom{N-t-2}{d-2}
$$
potential edges that contain $u$ and the vertex we are about to explore, but none of the vertices we have already explored. Hence the probability that $u$ becomes active during step $t+1$ is given by
$$ 
\pi= \pi_t= 1-(1-p)^{\nu_{t+1}}.
$$ 
Note that for times $t \ll N$ and with our scaling of $p$ one has
$$
\pi \sim 1-e^{-\frac \l N}
$$ 
which is the same scaling as in the case $d=2$. On the other hand, $$
\pi_t = p \nu_{t+1}+ \mathcal{O}(p^2 \nu_{t+1}^2)= \lambda (N-t)^{d-2} N^{-d+1}+ \mathcal{O}(\frac{1}{N^2}).
$$
This implies that $\eta_t$ given $A_{t-1} = s$ is distributed like $\sum_{i=1}^{N-(t-1)-s} Y_i^t$, where each of the $Y_i^t$ is an indicator with success probability $\pi_{t}$.  Note that the $Y_i$ are {\it not} independent, which establishes the major difference between the case $d=2$ (when the $Y_i$ are obviously independent) and the cases $d\ge 3$.
\end{enumerate}}
\end{rems}

To simplify matters, we will change our process slightly and call instead
\[ A_t = A_{t-1} + \eta_t -1\]
for all $t\in\N$ the exploration process. Of course, this process agrees with the one previously considered up to the first time the process hits 0.
We will follow the ideas in \cite{bollobas_riordan2} and rewrite $A_t$ (up to small errors) as the sum of a deterministic process and a martingale. This also motivates our study of moderate deviations of martingales with $n$ dependent martingale increments in the next section.

To this end, let $$D_t:= \E[\eta_t-1| \mathcal{F}_{t-1}]$$ where $\mathcal{F}_{t}$ is the $\sigma$-Algebra generated by the random variables $A_0, \eta_1, \ldots, \eta_t$. From the above we learn that
$$
 \E[\eta_{t+1}| \mathcal{F}_{t}]= U_t \pi_1 = U_t p \nu_{t+1} + \mathcal{O}(\frac 1N).
$$
For later use we also recall that in \cite{bollobas_riordan2} it was shown that with
$$
\pi_2:= 1-(1-p)^{\binom{N-t-3}{d-3}}\sim p^{\binom{N-t-3}{d-3}}\sim \lambda (d-2)(N-t)^{d-3}N^{-d+1}
$$
and $\pi_3 \sim \pi_1^2$ we have that
\begin{eqnarray}\label{var}
\V[\eta_{t+1}| \mathcal{F}_{t}]&=& U_t(U_t-1)(\pi_2+\pi_3)+U_t\pi_1-(U_t \pi_1)^2 \nonumber\\
&\sim& U_t^2 \pi_2 +U_t\pi_1\nonumber\\
&\sim& \lambda (d-2) (1-\frac t N)^{d-3} \frac {U_t^2}{N^2}+ \l (1-\frac t N)^{d-2 } \frac {U_t}{N}
\end{eqnarray}

From these computations we obtain
\begin{eqnarray*}
D_{t+1}&=& U_t p \nu_{t+1}-1 + \mathcal{O}(\frac 1N)\\
&=& \alpha_{t+1} (N-t-A_t)-1+ \mathcal{O}(\frac 1N)
\end{eqnarray*}
where 
$$
\alpha_t= p \nu_t= p \binom{N-t-1}{d-2} .
$$  
Now set 
\begin{equation}\label{def_Delta}
\Delta_{t+1}:= A_{t+1}- A_t -D_{t+1} =\eta_{t+1}-\E[\eta_{t+1}| \mathcal{F}_t]
\end{equation}
such that
\begin{equation}\label{def_At+1}
\begin{split}
A_{t+1} 
=& A_t + D_{t+1}+ \Delta_{t+1}\\
=& (1-\alpha_{t+1})A_t+\alpha_{t+1}(N-t) -1 +\Delta_{t+1}+\mathcal{O}(\frac 1N).
\end{split}
\end{equation}
By definition we have $\E[\Delta_{t+1}| \mathcal{F}_{t}]=0$, thus $(\Delta_t)_t$ is a martingale difference sequence. In particular, a simple bound for the variance of $(\Delta_t)_t$ is given by
\begin{lemma}\cite[Lemma 8]{bollobas_riordan3}
\label{lem_conVara.s}
Let $p$ and $\l>1$ be given as in Theorem \ref{main_theorem}. Then there is a constant $M>0$ such that for all $1\leq t \leq N$, we have
$$
\Var(\Delta_t \vert \Fc_{t-1})\leq M
$$
with probability 1.
\end{lemma}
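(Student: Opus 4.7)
The plan is to reduce the claim to the variance computation already carried out in \eqref{var}, then control the prefactors uniformly in $t$ and $N$ using the trivial bound $U_t\le N$.

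First I would observe that, by the definition \eqref{def_Delta} of $\Delta_{t+1}$ as the centered version of $\eta_{t+1}$, one has $\Var(\Delta_{t+1}\mid\Fc_t)=\Var(\eta_{t+1}\mid\Fc_t)$, so it suffices to bound the conditional variance of $\eta_{t+1}$. The exact formula is
\begin{equation*}
\Var(\eta_{t+1}\mid\Fc_t)=U_t(U_t-1)(\pi_2+\pi_3)+U_t\pi_1-(U_t\pi_1)^2,
\end{equation*}
which is derived in the paragraph preceding \eqref{var} by treating $\eta_{t+1}$ as a sum of $U_t$ Bernoulli indicators whose pairwise covariances can be computed exactly (the joint activation of two unseen vertices is governed by the probabilities $\pi_2$ and $\pi_3$).

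The key step is then a uniform bound on the right-hand side. Since $U_t\le N$ holds deterministically (the unseen set is contained in $V$), dropping the negative term $(U_t\pi_1)^2$ gives
\begin{equation*}
\Var(\eta_{t+1}\mid\Fc_t)\le N^2(\pi_2+\pi_3)+N\pi_1.
\end{equation*}
Plugging in the explicit asymptotics $\pi_1\sim\l/N$, $\pi_2\sim\l(d-2)/N^2$ and $\pi_3\sim\pi_1^{\,2}\sim\l^2/N^2$ recalled just before \eqref{var} (which follow at once from $p=\l(d-2)!/N^{d-1}$ and the elementary inequality $1-(1-p)^{k}\le pk$ for $p\in[0,1]$ and $k\ge 0$), each of the three summands is bounded by a constant depending only on $\l$ and $d$. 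Taking, e.g., $M:=2\bigl(\l(d-2)+\l^2+\l\bigr)$ works for all $N$ large enough and all $1\le t\le N$, which yields the claim.

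The only subtlety is the dependence of $t$: the bound $U_t\le N$ is wasteful for small $t$, but since $\pi_2$ and $\pi_3$ already carry a factor $1/N^2$, this waste is harmless. No term needs refined control, so the proof is essentially bookkeeping on top of the variance identity from \eqref{var}; the main point is simply to notice that the $N$-dependence of the three probabilities $\pi_1,\pi_2,\pi_3$ exactly cancels the worst-case size of $U_t$.
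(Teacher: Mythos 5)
Your proof is correct and follows exactly the route the paper intends: the lemma is only cited there (from Bollob\'as--Riordan), but the displayed variance identity \eqref{var} together with the deterministic bound $U_t\le N$ and the uniform estimates $\pi_1\le \l/N$, $\pi_2\le \l(d-2)/N^2$, $\pi_3=\mathcal{O}(\l^2/N^2)$ is precisely the intended derivation. The reduction $\Var(\Delta_{t+1}\mid\Fc_t)=\Var(\eta_{t+1}\mid\Fc_t)$ and the observation that the $1/N$ and $1/N^2$ decay of the $\pi_i$ cancels the worst-case $U_t$ are both right, so no gap remains.
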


Obviously, the process $A_t$ is a key quantity for the analysis of the size of $C_{\le k_N}$ (and hence for the size of $\Cmax$)
We want to approximate it by the sum of a deterministic sequence and a martingale. To this end, we define 
\begin{align*}
x_0 &=0 \\
x_{t+1}&= (1-\alpha_{t+1})x_t+\alpha_{t+1}(N-t) -1.
\end{align*}

Then, with $A_0=k_N$, define
$$
A_{t+1}-x_{t+1}= (1-\alpha_{t+1})(A_t-x_t) + \Delta_{t+1}+\vep_{t+1}
$$
where $\vep_{t+1}$ is a shorthand for the error term at level $t+1$, which is of order $\mathcal{O}(\frac 1N)$ (cf. \eqref{def_At+1} and for more details we refer the reader to \cite[(10)]{bollobas_riordan2}).
So, if we set
$$
\beta_t := \prod_{i=1}^t (1-\a_i)
$$
we arrive at
$$
A_t-x_t = \sum_{i=1}^t \frac{\beta_t}{\beta_i}(\Delta_i+\vep_i).
$$
By defining
\begin{equation}\label{mart}
S_t:=  \sum_{i=1}^t \frac{1}{\beta_i}\Delta_i,
\end{equation}
we observe that $(S_t)$ is a martingale. Thus our desired approximation is given by
\begin{equation}\label{Approx.Proc}
 \tilde A_t:= x_t+ \beta_t S_t,   
\end{equation}
and we have by \cite[Lemma 3]{bollobas_riordan2} that
\begin{equation}
\begin{split}
\label{eq_Approx.errorbound}
|A_t-\tilde A_t| 
=& |A_t-x_t- \beta_t S_t| \\
=& |\sum_{i=1}^t \frac{\beta_t}{\beta_i}(\Delta_i+\vep_i)-\beta_t \sum_{i=1}^t \frac{1}{\beta_i}\Delta_i |\\
=& |\sum_{i=1}^t \frac{\beta_t}{\beta_i}\vep_i|
=\mathcal{O}\big(\frac { t}{N} \big)
\end{split}
\end{equation}
uniformly in $1\leq t\leq N$.

Note that the behaviour of $x_t$ can be determined as well (see \cite[(15)]{bollobas_riordan2}). Indeed, define $g_{d, \l}$ as
\begin{align}\label{eq_def.g}
g_{d, \l}(x)= 1-x-\exp\left(-\frac \l{d-1}\left(1-(1-x)^{d-1}\right)\right)
\end{align}
and
\begin{equation}\label{eq_def.f}
f(t):= f_{N, d, \l}(t):= Ng_{d, \l}(\frac tN).
\end{equation}
Then, we obtain uniformly in $0\leq t \leq N$,
\begin{align}\label{eq_trajectoryxt}
x_t= f(t)+\Oc(1).
\end{align}
In a nutshell the idea is now the following: The first time $t$, that $A_t$ is $0$ is the size of the union of the connected components of the first $k_N$ vertices. If $k_N$ is chosen large enough, this union will contain the giant component with overwhelming probability (where we say that an event has ''overwhelming probability'' when the probability of the complement of the event is negligible on the chosen moderate deviation scale). On the other hand, also with overwhelming probability, there is only one component with a size larger than $N^\xi$, if $\xi<1$ is chosen appropriately. Hence the smaller components do not count on a moderate deviations scale, and on that scale the size of the union of the connected components of the first $k_N$ vertices is the size of the largest component with overwhelming probability. On the other hand, as observed above, we may safely replace $A_t$ by $\tilde A_t$ when considering these quantities on a moderate deviation scale. That is to say
\begin{equation}\label{eq_simMDP1}
\P(A_t/N^\a \ge x) \sim \P(\tilde A_t/N^\a \ge x)
\end{equation}
for any $x$ and $\a >0$ and likewise 
\begin{equation}\label{eq_simMDP2}
\P(A_t/N^\a \le x) \sim \P(\tilde A_t/N^\a \le x).
\end{equation}
Moreover, notice that the stochastic behaviour of $\tilde A_t$ is governed by the martingale $(S_t)$ defined in \eqref{mart}. We will therefore analyze its moderate deviations in the next section.

\section{Moderate deviations for the martingale $S_t$ defined in \eqref{mart}}
As we will see in Section 4 the moderate deviations for the size of the giant component can be played back to the moderate deviations for the for the martingale $(S_t)$ defined in \eqref{mart}, in this section we will prove a moderate deviations principle for this martingale. Our main tool is the G\"artner-Ellis theorem \cite[Theorem 2.3.6]{dembozeitouni}. Note that we cannot simply quote MDPs for martingales from \cite{dembo_mart_mdp} or \cite{EL17}, because in our context the distributions of the martingale differences does depend on $N$.
The central result of this section is
\begin{theorem}\label{MDPmart}
Consider the process $(S_n)$ defined in \eqref{mart}. For  $\zeta \in (-\infty, \infty)$, and $\frac 12 < \a < 1$, put $\gamma(N)= \overline{\rho_\l} N + \zeta N^\a$ and assume for simplicity that $\gamma(N)$ is an integer to avoid rounding.
Then, for any choice of $\zeta$ the sequence of random variables 
$$
\frac{\beta_{\gamma(N)} S_{\gamma(N)}}{N^\alpha} 
$$ 
satisfies an MDP with speed $N^{2\alpha-1}$ and rate function $I(x)=x^2/2c$ where $c$ is given by
\begin{equation}\label{eq_c}
c= c_{d, \l}=  \l (1-\overline{\rho_\l})^2-\l_* (1-\overline{\rho_\l})+\overline{\rho_\l}(1-\overline{\rho_\l}) .
\end{equation}
Here we write $\overline{\rho_\l}$ for $\rho_{d,\l}$ and $\l_*$ is the dual branching parameter given in \eqref{def_lambda*}.
\end{theorem}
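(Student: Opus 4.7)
I would verify the hypotheses of the G\"artner--Ellis theorem for $X_N := \beta_{\gamma(N)} S_{\gamma(N)}/N^\alpha$ at speed $N^{2\alpha-1}$. Specifically, it suffices to establish that for every $\theta \in \mathbb{R}$,
\begin{equation}\label{plan:GE}
\Lambda(\theta) := \lim_{N\to\infty} \frac{1}{N^{2\alpha-1}}\log \E\Big[\exp\big(\theta\, N^{\alpha-1}\beta_{\gamma(N)} S_{\gamma(N)}\big)\Big] = \frac{\theta^2 c}{2},
\end{equation}
since the Legendre--Fenchel dual of $\theta^2 c/2$ is exactly $x^2/(2c)$. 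Setting $\xi_i := \theta N^{\alpha-1}\beta_{\gamma(N)}/\beta_i$, the exponent in \eqref{plan:GE} equals $\sum_{i=1}^{\gamma(N)}\xi_i \Delta_i$, and $\xi_i = \mathcal{O}(N^{\alpha-1})\to 0$ uniformly in $i$ since $\beta_i \ge \beta_{\gamma(N)} \to 1-\overline{\rho_\l} > 0$.

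\textbf{Exponential change of measure and Taylor expansion.} Introducing the mean-one martingale
$$
M_t := \exp\Big(\sum_{i\le t}\xi_i\Delta_i - \sum_{i\le t}\log\E[e^{\xi_i\Delta_i}\vert\mathcal{F}_{i-1}]\Big),
$$
the quantity inside the limit in \eqref{plan:GE} factorises as $\E[M_{\gamma(N)}\exp(L_N)]$ with $L_N := \sum_{i\le\gamma(N)}\log\E[e^{\xi_i\Delta_i}\vert\mathcal{F}_{i-1}]$. Because $\eta_i$ is, conditionally on $\mathcal{F}_{i-1}$, a sum of $U_{i-1}\le N$ Bernoulli variables of success probability $\pi_1 = \mathcal{O}(1/N)$, its conditional moments are Poisson-type bounded, so $\E[|\Delta_i|^k\vert\mathcal{F}_{i-1}]\le C_k$ uniformly in $i,N$. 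Together with $\xi_i\to 0$, a third-order expansion then gives
\begin{equation}\label{plan:taylor}
\log\E[e^{\xi_i\Delta_i}\vert\mathcal{F}_{i-1}] = \tfrac{\xi_i^2}{2}V_i + R_i,\qquad |R_i| \le C|\xi_i|^3,
\end{equation}
with $V_i := \Var(\Delta_i\vert\mathcal{F}_{i-1})$. Summing, $\sum_i|R_i| = \mathcal{O}(N\cdot N^{3\alpha-3}) = \mathcal{O}(N^{3\alpha-2}) = o(N^{2\alpha-1})$ since $\alpha < 1$, so the cubic remainder is negligible on the MDP scale.

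\textbf{Identifying the variance integral with $c$.} The quadratic part of $L_N/N^{2\alpha-1}$ equals $\tfrac{\theta^2}{2}W_N$ with $W_N := N^{-1}\sum_{i=1}^{\gamma(N)}(\beta_{\gamma(N)}/\beta_i)^2 V_i$. From \eqref{var} and $U_i/N \simeq 1-i/N - g_{d,\l}(i/N) = \beta_i$ (which follows from \eqref{eq_trajectoryxt} and the explicit form of $g_{d,\l}$), $W_N$ is a Riemann sum converging to
$$
(1-\overline{\rho_\l})^2\int_0^{\overline{\rho_\l}}\Big[\l(d-2)(1-s)^{d-3} + \frac{\l(1-s)^{d-2}}{\beta_s}\Big]\,ds,
$$
with $\beta_s = \exp(-\tfrac{\l}{d-1}(1-(1-s)^{d-1}))$. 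The first integrand integrates to $\l(1-(1-\overline{\rho_\l})^{d-2})$; the second, via $\tfrac{d}{ds}(1/\beta_s) = \l(1-s)^{d-2}/\beta_s$, integrates to $1/(1-\overline{\rho_\l})-1$. A short simplification using $\l_* = \l(1-\overline{\rho_\l})^{d-1}$ from \eqref{def_lambda*} reduces the limit to $\l(1-\overline{\rho_\l})^2 - \l_*(1-\overline{\rho_\l}) + \overline{\rho_\l}(1-\overline{\rho_\l}) = c$.

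\textbf{Main obstacle.} The delicate step is transferring convergence in probability of $W_N$ to convergence of the \emph{unconditional} log-MGF on the sharp scale $N^{2\alpha-1}$. Since $\E[M_{\gamma(N)}]=1$, truncating $\E[M_{\gamma(N)}e^{L_N}]$ on the event $\{|W_N-c|\le \delta\}$ yields both an upper and a matching lower bound of the form $\exp((\theta^2 c/2 \pm \delta')N^{2\alpha-1})$; but the complementary event must be controlled super-exponentially at speed $N^{2\alpha-1}$. For this I would combine Lemma \ref{lem_conVara.s} with Azuma--Hoeffding-type estimates on $(S_t)$ and the deterministic approximation \eqref{eq_trajectoryxt} to deduce that $A_t$ concentrates around $x_t$ with sub-Gaussian tails on scale $\sqrt{N}$, hence $V_i$ (and thus $W_N$) concentrates with fluctuations of order $1/\sqrt{N}$---amply sufficient. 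Letting $\delta\downarrow 0$ after $N\to\infty$ then yields \eqref{plan:GE} and closes the G\"artner--Ellis verification.
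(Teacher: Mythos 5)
Your proposal follows essentially the same route as the paper: the G\"artner--Ellis theorem at speed $N^{2\alpha-1}$, a third-order Taylor expansion of the conditional moment generating function of the increments, concentration of $U_i$ around its deterministic trajectory via the Azuma--Hoeffding estimate \eqref{max_doob} so that the random conditional variances can be replaced by their deterministic limits, and super-exponential negligibility of the complementary event using the uniform variance bound of Lemma \ref{lem_conVara.s}; your explicit evaluation of the limiting variance integral correctly reproduces $c$, which the paper instead quotes from \cite{bollobas_riordan2}. The one point to tighten is your justification of the uniform conditional moment bounds $\E[|\Delta_i|^k\mid\mathcal{F}_{i-1}]\le C_k$: for $d\ge 3$ the indicators composing $\eta_i$ are \emph{not} independent, so ``sum of Bernoullis with success probability $\mathcal{O}(1/N)$, hence Poisson-type moments'' does not apply directly --- this is precisely what the combinatorial hyperedge-counting estimate leading to \eqref{bern_cond} and \eqref{bern_cond2} is needed for.
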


The key idea will be to employ the G\"artner-Ellis theorem  \cite[Theorem 2.3.6]{dembozeitouni}. To this end we need to study the moment generating function of $S_n$ on the level of moderate deviations, i.e. we need to establish the existence of
$$
\lim_{N \to \infty} \frac 1 {N^{2\a-1}} \log \E \left(\exp\left(t \frac{N^\a}{N} \beta_{\gamma(N)} S_{\gamma(N)}\right)\right)
$$				
for $t \in \R$. We will expand the moment generating function into a Taylor series up to the third order. However, to compute this we need some preparation. Recall our definitions \eqref{def_Delta} and \eqref{mart}, from which we obtain $\Delta_{t+1} =\eta_{t+1}-\E[\eta_{t+1}| \mathcal{F}_t]$ and $S_t=  \sum_{i=1}^t  \Delta_i/\beta_i $.

\subsection{Moments of $\eta_i$}
The essential point in the Taylor expansion is that the conditional variances of $\Delta_i$ and thus of $\eta_i$ depend on the number of unseen vertices at time $i$, $U_i$. We will therefore show a rough concentration result for $U_i$. To this end, recall that $\eta_i$ is the number of unseen vertices that are set active in the $i$-th step. 
Thus, it holds
$$
\eta_i = \sum_{ j\in U_i} \mathbbm{1}_{\left\lbrace \substack{\mbox{ $\exists$ an unexplored hyperedge containing $j$ and the currently active vertex}} \right\rbrace}
$$ 
and for any $k \ge 2$
$$
\eta_i^k = \sum_{\substack{j_{i_1}, \ldots, j_{i_k} \in U_i\\ \lbrace i_1,\ldots, i_k \rbrace\subset \lbrace 1,\ldots,N\rbrace}} \mathbbm{1}_{\left\lbrace j_{i_1}, \ldots, j_{i_k} \mbox{ are activated in step } i \right\rbrace}.
$$

Assume from stage $i$ to $i+1$, the vertex $v_i$ is being explored. There are various ways to activate $j_1, \ldots j_k$. 
Without loss of generality, we assume that the $j_i$ are pairwise different; otherwise this problem is reduced to estimating the lower moment of $\eta_i$.
One needs to activate a set of hyperedges $e_1, \ldots, e_m$ such that $j_1, \ldots, j_k$ are contained in these hyperedges. Because we have to choose the remaining vertices of the hyperedge from the unexplored vertices, the probability that fixed $l$ of the vertices are contained in one hyperedge is given by
$$
\pi_l = 1- (1-p)^{\binom{N-i-l-1}{d-l-1}} \sim p\binom{N-i-l-1}{d-l-1}\sim \l (d-2) \cdots (d-l) (N-i)^{d-l-1} N^{1-d}.
$$
That is, if $l_1, \ldots l_m$ sum up to $k$, then $\prod \pi_l \leq   D\l^m N^{-k}$ for some constant $D$. On the other hand, there is a constant $C$ such that there at most $C^{k/d+1}$ ways to write $k$ as a sum of integers at most $d-1$. Altogether with \eqref{var} this gives for $k=3$ and any $1\le i\le N$ 
\begin{equation}\label{bern_cond}
\E \left\lbrack  |\eta_i |^3 \vert\mathcal{F}_{i-1}\right\rbrack \le L \Var(\eta_i \vert \mathcal{F}_{i-1}) 
\end{equation}
for some constant $L>0$ as well as
\begin{equation}\label{bern_cond2}
\E \left\lbrack  |\Delta_i |^3 \vert\mathcal{F}_{i-1}\right\rbrack \le L \Var(\Delta_i \vert \mathcal{F}_{i-1}). 
\end{equation}

\subsection{Exponential estimates}
Moreover, we will need a Hoeffding-Azuma-type inequality for $S_N$ (e.g.\ \cite[Lemma 12]{bollobas_riordan3}):

For a constant $c_3>0$, it holds
\begin{equation*}
\P( \max_{1 \le t \le N} S_t \ge y ) \le \exp\left(-c_3 y^2/N\right).
\end{equation*}
In particular, taking $y=N^\beta$ for $\beta>0$ yields
\begin{equation}\label{max_doob}
\P( \max_{1 \le t \le N} S_t \ge N^\beta ) \le \exp\left(-c_3N^{2\beta-1}\right).
\end{equation}
(This could, in fact, also be proved using the results in \cite{delapena} together with our above considerations).  

\subsection{Taylor expansion of $\Delta_i$ up to third order}
As we know 
$$U_t=N-t-A_t, \qquad A_t= \tilde A_t + \mathcal{O}(\frac tN), \qquad \tilde A_t= x_t + \beta_t S_t
$$ 
and the trajectory of $x_t$ is given by \eqref{eq_def.f}. Denote (cf. \cite{bollobas_riordan2}, equation(22))
\begin{equation}\label{eq_ui}
u_i= N \exp\left(-\frac{\lambda}{d-1}\left(1-(1-\frac iN)^{d-1}\right)\right).
\end{equation}
Hence, we have 
$$
U_i-u_i
=-\beta_i S_i  + \Oc(1).
$$

Fix a $\beta>0$ with $\frac 12 < \a<\beta <1$ and take a deterministic sequence $b_N$ with $b_N\to\infty$ and $b_N=\Oc(N^\beta)$, thus $b_N=o(N)$. Let us define the event 
$$
\Sigma_N =\lbrace \forall 1\leq i\leq N\colon |U_{\gamma(i)}-u_{\gamma(i)}|\leq b_N  \rbrace,
$$
which has probability at least $1-\exp( - \eta N^{2\b-1} )$ for an appropriate $\eta>0$ by \eqref{max_doob}. Indeed, the event $\lbrace \exists i\colon 1\leq i\leq N,  \mbox{ such that } |U_{\gamma(i)}- u_{\gamma(i)}|> b_N \rbrace$ has probability at most $\exp(-c_4N^{2\beta-1})$ for some constant $c_4>0$. ($c_4$ differs from $c_3$ in \eqref{max_doob} by at most an absolute constant factor.) 

We use the martingale property to get
\begin{equation} \label{equ_mgfEST1}
\begin{split}
&\E \left(\exp\left(t \frac{N^\a}{N} \beta_{\gamma(N)} S_{\gamma(N)}\right)\right)\\
=& \E \left( \E\left[ \exp\left(t \frac{N^\a}{N}\beta_{\gamma(N)} S_{\gamma(N)} \right)\bigg \vert  \mathcal{F}_{\gamma(N)-1}\right] \right)\\
=&\E\left(\left(\exp\left(t \frac{N^\a}{N} \beta_{\gamma(N)} S_{\gamma(N)-1}\right)\right) \E\left[\exp\left(t \frac{N^\a}{N} \Delta_{\gamma(N)} \right) \bigg|\mathcal{F}_{\gamma(N)-1}\right]\right).
\end{split}
\end{equation}
By expanding up to third order, using that $\E[\Delta_{i+1} |\mathcal{F}_i]=0$, for all $i$, applying \eqref{var} as well as the crude bounds on the third moment derived in \eqref{bern_cond}, we obtain
\begin{equation} \label{equ_mgfEST2}
\begin{split}
& \E\left[\exp\left(t N^{\a-1} \frac{\beta_{\gamma(N)}} {\beta_{\gamma(i)+1}}\Delta_{\gamma(i)+1} \right) \bigg|\mathcal{F}_{\gamma(i)}\right] \\
=& 1+ \frac {t^2}{2} N^{2\a-2} \frac{\beta_{\gamma(N)}^2}{\beta_{\gamma(i)+1}^2} \left(\l(d-2) \left(1-\frac {\gamma(i)}{N}\right)^{d-3}\frac{U_{\gamma(i)}^2}{N^2}+ \l \left(1-\frac {\gamma(i)} N \right)^{d-2 } \frac {U_{\gamma(i)}}{N}\right)\\
& +\mathcal{O}\left(t^3 N^{3\a-3}\frac{U_{\gamma(i)}}{N} \right).
\end{split}
\end{equation}
Here we used the fact $U_n \le n \le N$ and $\beta_t = \prod_{i=1}^t (1-\a_i)$. We arrive at
\begin{align*}
&\E\left[  \exp\left( t N^{\a-1} \frac{\beta_{\gamma(N)}}{\beta_{\gamma(i)+1}} \Delta_{\gamma(i)+1} \right) \mathds{1}_{\Sigma_N} \bigg| \mathcal{F}_{\gamma(i)} \right] \\
=& \exp\bigg( \frac {t^2}{2} N^{2\a-2} \frac{\beta_{\gamma(N)}^2}{\beta_{\gamma(i)+1}^2} \left(\l(d-2) \left(1-\frac {\gamma(i)}{N}\right)^{d-3} \frac{u_{\gamma(i)}^2}{N^2}+ \l \left(1-\frac {\gamma(i)} N \right)^{d-2 } \frac {u_{\gamma(i)}}{N}\right)\\
&+ \mathcal{O}(N^{3\a-3}) \bigg)(1+o(1)).
\end{align*}
Redoing this conditioning $\gamma(N)$ times, we finally see that
\begin{equation}\label{eq_TaylorGammaN}
\begin{split}
&\E \left[\exp\left(t \frac{N^\a}{N}\beta_{\gamma(N)} S_{\gamma(N)}\right) \mathds{1}_{ \Sigma_N}\right]\\
=& \exp\bigg(\frac {t^2}2 N^{2\a-2} \sum_{i=0}^{\gamma(N)-1}\frac{\beta_{\gamma(N)}^2}{\beta_{i+1}^2}\left(\l(d-2)\left(1-\frac{i}{N}\right)^{d-3}\frac{u_i^2}{N^2}+ \l \left(1-\frac {i}{N}\right)^{d-2 }\frac {u_i}{N}\right)\\
&+ \mathcal{O}(N^{3\a-3})\bigg),
\end{split}
\end{equation}
which gives our desired Taylor expansion of the moment generating function conditioned on the event $\Sigma_N$.

\subsection{Boundedness of the moment generating function}
Let us briefly recall some notations:
\begin{align*}
\alpha_t = & p \nu_t= p \binom{N-t-1}{d-2}, \\
\Delta_{t+1} = & A_{t+1} - A_t - D_{t+1}, \\
A_t = & A_{t-1} + \eta_t -1, \\
D_{t+1} = & \E[\eta_{t+1}| \mathcal{F}_{t}] ,\\
\beta_t &= \prod_{i=1}^t (1-\a_i),  \\
S_t&=  \sum_{i=1}^t \frac{1}{\beta_i}\Delta_i.
\end{align*}
Note that $\beta_t/\beta_i= \prod_{j=i+1}^t (1-\a_j)$ is between 0 and 1.
Moreover recall that from Lemma \ref{lem_conVara.s} we obtain that 
$$
\Var(\Delta_t \vert \Fc_{t-1})\leq M
$$
for all $t$. 
Using the fact that $U_i \le N$ for all $1\leq i\leq \gamma_N$  analogously to \eqref{equ_mgfEST2}, we can derive the following expansion for all $0\leq i\leq \gamma_N$ and any fixed $t$
\begin{equation*} 
\begin{split}
& \E\left[\exp\left(t N^{\a-1} \frac{\beta_{\gamma(N)}} {\beta_{\gamma(i)+1}}\Delta_{\gamma(i)+1} \right) \bigg|\mathcal{F}_{\gamma(i)}\right] \\
=& 1+ \frac {t^2}{2}N^{2\a-2} \frac{\beta_{\gamma(N)}^2} {\beta_{\gamma(i)+1}^2} \Var\left(\Delta_{\gamma(i)+1} \vert \mathcal{F}_{\gamma(i)}\right) + \mathcal{O}(t^3 N^{3\a-3})\\
\leq& 1+\frac {t^2}{2}\frac{\beta_{\gamma(N)}^2} {\beta_{\gamma(i)+1}^2}  M N^{2\a-2}  + \mathcal{O}(t^3 N^{3\a-3})\\
\leq& \exp\left( \frac {t^2}{2} C N^{2\a-2} \right),
\end{split}
\end{equation*}
where $C>0$ is some constant large enough. Therefore, repeating the above expansion for all $1\leq i\leq \gamma(N)$ and employing the martingale difference sequence property as in \eqref{equ_mgfEST1} yield
\begin{equation}\label{eq_BoundMGF}
\begin{split}
&\E \left(\exp\left(t \frac{N^\a}{N} \beta_{\gamma(N)} S_{\gamma(N)}\right)\right)\\
\leq& \E\left( \exp\left(t \frac{N^\a}{N} \beta_{\gamma(N)} S_{\gamma(N)-1}\right)\E\left[\exp \left(t  \frac{N^\a}{N} \Delta_{\gamma(N)}\right) \bigg|\mathcal{F}_{\gamma(N)-1}\right]\right) \\
\leq& \exp\left( \frac {t^2}{2} \tilde{C}N^{2\a-1} \right),
\end{split}
\end{equation}
where $\tilde{C}>0$ is some large enough constant.

\subsection{The goal}
With above preparation, we are finally ready to prove: 
\begin{proof}[Proof of Theorem \ref{MDPmart}]
On one hand, by expanding the moment generating function in \eqref{eq_TaylorGammaN}, taking the logarithm and dividing by $N^{2\a -1}$, we see that 
\begin{equation}\label{equa_MgfSigmaN}
\begin{split}
&
\lim_{N \to \infty}\frac 1 {N^{2\a -1}}\log \E \left(\exp\left(t \frac{N^\a}{N}\beta_{\gamma(N)} S_{\gamma(N)}\right)\right)\\
\geq&
\lim_{N \to \infty}\frac 1 {N^{2\a -1}}\log \E \left[\exp\left(t \frac{N^\a}{N}\beta_{\gamma(N)} S_{\gamma(N)}\right) \mathds{1}_{\Sigma_N} \right]\\
=&
\lim_{N \to \infty}  \frac {t^2}2 \left[ \frac{1}{N} 
\sum_{i=0}^{\gamma(N)-1} \frac{\beta_{\gamma(N)}^2}{\beta_{i+1}^2} \left(\l(d-2)\left(1-\frac{i}{N}\right)^{d-3}\frac{u_i^2}{N^2}+ \l \left(1-\frac {i}{N}\right)^{d-2 }\frac {u_i}{N}\right) \right]\\
=& \frac {t^2}{2}\left(\lambda(1-\overline{\rho_\l})^2-\lambda_* (1-\overline{\rho_\l})+\overline{\rho_\l}(1-\overline{\rho_\l}) \right),
\end{split}
\end{equation}
where we used the abbreviation $\overline{\rho_\l}= \rho_{\l,d}$ and the last line follows by the asymptotics for 
$$
\sum_{i=0}^{\gamma(N)-1} \frac{\beta_{\gamma(N)}^2}{\beta_{i+1}^2} \left(\l(d-2)\left(1-\frac{i}{N}\right)^{d-3}\frac{u_i^2}{N^2}+ \l \left(1-\frac {i}{N}\right)^{d-2 }\frac {u_i}{N}\right)
$$
in \cite[(23)]{bollobas_riordan2}. Indeed, by $\gamma(N)= \overline{\rho_\l} N + \zeta N^\a$ and the explicit formula of $u_i$ in \eqref{eq_ui}, we have $u_i\leq cN$ for some $c>0$ uniformly in $1\leq i\leq N$. Moreover, $\beta_i$ is a constant between 0 and 1. Thus, each summand in
$$
\sum_{i=\overline{\rho_\l}N}^{\overline{\rho_\l}N+\zeta N^\a} \frac{\beta_{\gamma(N)}^2}{\beta_{i+1}^2} \left(\l(d-2)\left(1-\frac{i}{N}\right)^{d-3}\frac{u_i^2}{N^2}+ \l \left(1-\frac {i}{N}\right)^{d-2 }\frac {u_i}{N}\right)
$$
is uniformly bounded. Therefore, it follows
\begin{align*}
&\sum_{i=0}^{\gamma(N)-1} \frac{\beta_{\gamma(N)}^2}{\beta_{i+1}^2} \left(\l(d-2)\left(1-\frac{i}{N}\right)^{d-3}\frac{u_i^2}{N^2}+ \l \left(1-\frac {i}{N}\right)^{d-2 }\frac {u_i}{N}\right)\\
-& \sum_{i=0}^{\overline{\rho_\l}N-1} \frac{\beta_{\gamma(N)}^2}{\beta_{i+1}^2} \left(\l(d-2)\left(1-\frac{i}{N}\right)^{d-3}\frac{u_i^2}{N^2}+ \l \left(1-\frac {i}{N}\right)^{d-2 }\frac {u_i}{N}\right)=o(N).
\end{align*}
Thus
\begin{align*}
& \frac{1}{N}
\sum_{i=0}^{\overline{\rho_\l}N-1} \frac{\beta_{\gamma(N)}^2}{\beta_{i+1}^2} \left(\l(d-2)\left(1-\frac{i}{N}\right)^{d-3}\frac{u_i^2}{N^2}+ \l \left(1-\frac {i}{N}\right)^{d-2 }\frac {u_i}{N}\right)+o(1)
\end{align*}
yields the claim.

On the other hand, with the bound for moment generating function derived in \eqref{eq_BoundMGF} and the fact $\frac 12 < \a <\b <1$, it holds

\begin{equation}\label{equa_MgfSigmaNc}
\begin{split}
&\lim_{N \to \infty}\frac 1 {N^{2\a -1}}\log \E \left[\exp\left(t \frac{N^\a}{N}\beta_{\gamma(N)} S_{\gamma(N)}\right) \mathds{1}_{\Sigma_N^c} \right] \\
\leq& \lim_{N \to \infty}\frac 1 {N^{2\a -1}}\log \left( \exp\left( \frac {t^2}{2} \tilde{C} N^{2\a-1} \right) \P(\Sigma_N^c) \right)\\
\leq& \lim_{N \to \infty}\frac 1 {N^{2\a -1}}\left( \frac {t^2}{2} \tilde{C} N^{2\a-1} + \log\P(\Sigma_N^c) \right)\\
\leq& \lim_{N \to \infty}\frac 1 {N^{2\a -1}}\left( \frac {t^2}{2} \tilde{C} N^{2\a-1} - \eta N^{2\b-1} \right)\\
=& -\infty.
\end{split}
\end{equation}
Therefore, we can conclude 
\begin{align*}
\lim_{N \to \infty}\frac 1 {N^{2\a -1}}\log \E\,\bigg( \exp \bigg( t \,\frac{N^\a}N \beta_{\gamma(N)} & S_{\gamma(N)} \bigg)  \bigg)\\
\leq  \lim_{N \to \infty}\frac 1 {N^{2\a -1}} \log\max \bigg \lbrace \E \bigg[\exp \bigg(t \frac{N^\a}{N} & \beta_{\gamma(N)} S_{\gamma(N)}\bigg) \mathds{1}_{\Sigma_N} \bigg],\\
 \E \bigg[ \exp \bigg(t \frac{N^\a}{N} & \beta_{\gamma(N)} S_{\gamma(N)}\bigg) \mathds{1}_{\Sigma_N^c} \bigg] \bigg\rbrace \\
=\frac {t^2}{2}\big(\lambda(1-\overline{\rho_\l})^2-\lambda_* (1-\overline{\rho_\l}) + \overline{\rho_\l}(1-&\overline{\rho_\l}) \big).
\end{align*}
Here the last but one line follows by combining the equations \eqref{equa_MgfSigmaN} and \eqref{equa_MgfSigmaNc}. Since the parabola $ct^2/2$ satisfies all the assumptions on the moment generating function in the G\"artner-Ellis theorem \cite[Theorem 2.3.6]{dembozeitouni}, therefore, its Legendre transform $x^2/2c$ is then the rate function in Theorem \ref{MDPmart}.
\end{proof}

\section{Choice of $k_N$}
Following the idea we sketched in the previous sections, we seek a smart choice for $k_N$ such that the union of the connected components of the first $k_N$ vertices does not essentially differ from  the giant component with overwhelming probability. This union will be called $\Ckn$ in the sequel, its size will be denoted by $\Ckna$. 

Let us first recall two very useful results from the literature: It has been shown that $|\Cmax|$ concentrates on $\overline{\rho_\l}N$ and the second largest component, denoted by $\Cs$ is unlikely to be large as well.

\begin{remark}
The notation $f(N)=\Omega(g(N))$ is used for $\exists N_0\in \mathbb{N}$, $C>0$ so that $f(N)\geq Cg(N)$ for $N\geq N_0$.
\end{remark}

\begin{theorem}\cite[Theorem 4]{bollobas_riordan3}
\label{thm_largeDev}
With the same assumptions as in Theorem \ref{main_theorem}, i.e.\ set $\l=1+\epsilon$ with $\epsilon=\mathcal{O}(1)$ as well as $\epsilon^3N\to\infty$. If $\omega=\omega(N) \to\infty$ and $\omega=\Oc\left( \sqrt{\epsilon^3N}\right)$, then
$$
\P \left( \big| |\Cmax|-\overline{\rho_\l}N \big|\geq \omega\sqrt{N/\epsilon} \right) 
=\exp\left( -\Omega(\omega^2) \right).
$$
Furthermore, if $L=L(n)$ fulfills $\epsilon^2L\to\infty$ and $L=\Oc(\epsilon N)$. Then there exists $C>0$ such that the second largest component $\Cs$ in $\Gc(N,p)$ satisfies
\begin{equation}\label{equa_second}
\P\left(| \Cs| >L \right)\leq C\frac{\epsilon N}{L}\exp(-\epsilon^2L/C).
\end{equation}
for large enough $N$.
\end{theorem}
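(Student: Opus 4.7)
The statement has two separate parts, and I would attack them with the exploration-process machinery of Section 2 together with a classical subcritical-percolation argument.

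\textbf{Part 1: concentration of $|\Cmax|$.} I would run the exploration starting from a single vertex and use the approximation $A_t=x_t+\beta_tS_t+\Oc(t/N)$ from \eqref{eq_Approx.errorbound}. Since $x_t=Ng_{d,\l}(t/N)+\Oc(1)$ by \eqref{eq_trajectoryxt}, and a direct differentiation of \eqref{eq_def.g} together with \eqref{rho_alt} gives $g_{d,\l}(\overline{\rho_\l})=0$ and $-g_{d,\l}'(\overline{\rho_\l})=1-\l_*\asymp\epsilon$ when $\l=1+\epsilon$, the event that the first zero of $A$ deviates from $\overline{\rho_\l}N$ by $\omega\sqrt{N/\epsilon}$ forces $|\beta_tS_t|$ to reach magnitude of order $(1-\l_*)\cdot\omega\sqrt{N/\epsilon}\asymp\omega\sqrt{N\epsilon}$ somewhere in a small window around $\overline{\rho_\l}N$. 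A maximal martingale inequality applied to $(S_t)$ should then convert this into the desired tail: the key input beyond the crude estimate \eqref{max_doob} is a variance-sensitive Freedman/Bernstein-type bound, because \eqref{var} shows that near $t=\overline{\rho_\l}N$ the per-step conditional variance of $\Delta_t$ is of order $\epsilon$, so the quadratic variation of $\sum\Delta_i/\beta_i$ up to that time is $\asymp\epsilon N$ rather than $N$. This replaces $N$ in the exponent of \eqref{max_doob} by $\epsilon N$, and $\P(|S_t|\geq\omega\sqrt{N\epsilon})\leq\exp(-c\omega^2)$ produces exactly the announced $\exp(-\Omega(\omega^2))$. The hypothesis $\omega=\Oc(\sqrt{\epsilon^3N})$ is precisely what keeps the deviation inside the linear regime of $g_{d,\l}$ around $\overline{\rho_\l}$, so that the Taylor linearization is legitimate.

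\textbf{Part 2: tail bound on $|\Cs|$.} Once the giant component has been exhausted (Part 1 makes this overwhelmingly likely), the residual hypergraph on the $(1-\overline{\rho_\l})N$ unseen vertices is stochastically comparable to a subcritical hypergraph whose associated exploration has offspring mean $\l_*=\l(1-\overline{\rho_\l})^{d-1}<1$ by \eqref{def_lambda*}. I would couple a single exploration started at an unseen vertex with a subcritical Galton-Watson process of mean $\l_*$ and apply a Cram\'er-type bound: such a subcritical branching process, with relative subcriticality $1-\l_*\asymp\epsilon$, produces a progeny of size at least $L$ with probability at most $\exp(-c\epsilon^2L)$. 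Summing over at most $\Oc(\epsilon N/L)$ candidate seeds for components of size $\geq L$ then produces the prefactor $\epsilon N/L$ and the stated bound.

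\textbf{Main obstacle.} The hard step is clearly Part 1: the naive use of \eqref{max_doob} with its universal denominator $N$ gives only $\exp(-c\omega^2\epsilon)$, which is off by a full factor of $\epsilon$ in the exponent. The real work is upgrading Azuma-Hoeffding to a variance-aware bound that actually sees the true per-step variance $\asymp\epsilon$ from \eqref{var}, rather than the crude uniform bound $M$ of Lemma \ref{lem_conVara.s}, and doing this cleanly for the rescaled martingale $\sum\Delta_i/\beta_i$ uniformly on a window that is long enough to capture the first zero of $A_t$ but short enough that the linearization of $g_{d,\l}$ around $\overline{\rho_\l}$ remains valid. Part 2 is then essentially classical subcritical percolation with the dual parameter $\l_*$, modulo the routine modification to the hyperedge structure.
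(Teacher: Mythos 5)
First, note that the paper offers no proof of this statement: it is imported verbatim from \cite[Theorem 4]{bollobas_riordan3} and used as a black box. Your sketch is therefore being measured against the argument in that reference, which is indeed of the same exploration-process-plus-martingale-plus-duality type you propose; the overall architecture is right. However, there are two concrete gaps in Part 1. (i) The first zero of $A_t$ started from a \emph{single} vertex is the size of that vertex's own component, which with probability $1-\overline{\rho_\l}$ (close to $1$ when $\epsilon\to 0$) is a small component that dies after $\Oc(\epsilon^{-2})$ steps; so the event ``$A$ hits $0$ before $\overline{\rho_\l}N-\omega\sqrt{N/\epsilon}$'' has probability near $1$, not $\exp(-\Omega(\omega^2))$, and your lower-deviation bound collapses. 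One must either seed the exploration with $k_N$ vertices as in Section 4, or run the sequential exploration of all components and control the walk $X_t=A_t-\Cc_t$ together with its running minimum (as in Section 5.2 and in the cited source), plus a separate argument that exactly one large component is encountered. (ii) Your variance accounting is wrong: by \eqref{var} with $U_t\asymp N$ and $1-t/N\asymp 1$, the per-step conditional variance of $\Delta_t$ near $t=\overline{\rho_\l}N$ is $\Theta(1)$ (consistent with Lemma \ref{lem_conVara.s}), not $\Theta(\epsilon)$. The quadratic variation up to the hitting time is $\asymp\epsilon N$ because the window itself has length $\overline{\rho_\l}N\asymp\epsilon N$ when $\l=1+\epsilon$, $\epsilon\to0$, not because each step contributes $\epsilon$. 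The final exponent happens to come out the same, but setting up the Freedman/Bernstein bound with the wrong per-step variance (and without invoking the Bernstein-type moment condition \eqref{bern_cond2}, which is needed because the increments $\Delta_i/\beta_i$ are unbounded) would not produce a valid proof.

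In Part 2 the main unjustified step is the claim that there are only ``$\Oc(\epsilon N/L)$ candidate seeds'': a union bound over starting vertices gives $N$ (or $N/L$ after dividing by the component size), and the extra factor of $\epsilon$ in the prefactor $\epsilon N/L$ does not come from counting seeds but from the tail of the total progeny of a barely subcritical branching process, which requires an Otter--Dwass/ballot-type estimate rather than a bare Cram\'er bound. One must also justify that the residual hypergraph after deleting the explored components is again distributed as $\Gc^d(N-s,p)$ with branching factor $\l(1-s/N)^{d-1}<1$ (this is the content of the conditioning argument used in Lemma \ref{lem_errorESTrn}); your phrase ``stochastically comparable'' papers over exactly the conditioning issue that makes this step nontrivial.
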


\underline{\textbf{Choice of $k_N$:}} we set
\begin{align}\label{choiceKn}
k_N= N^{\gamma} \qquad \text{ for a } 2\a-1<\gamma<\a,
  \end{align}
(recall that $\a\in(\frac{1}{2},1)$). 
The upper bound of $k_N$ will be discussed more precisely in Section \ref{sec_final}, whereas the lower bound arises from the fact that the union of connected components starting with $k_N$ vertices should contain the giant component with overwhelming probability:

\begin{proposition}\label{prop_Cmax>Ckn}
For any $\frac 12 < \a <1$,
$$
\P\left( |\Cmax|>\Ckna \right)\leq \exp(-Ck_N),
$$
for some $C>0$ large enough.
\end{proposition}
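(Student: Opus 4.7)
The plan is to combine a set-theoretic inclusion with vertex-label exchangeability and the large-deviation estimate of Theorem~\ref{thm_largeDev}.

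First I would observe the following inclusion. If any of the vertices $1, \ldots, k_N$ lies in $\Cmax$, then $\Cmax$ is one of the components whose union forms $\Ckn$, so $\Cmax \subseteq \Ckn$ and $|\Ckn| \ge |\Cmax|$. Contrapositively,
\[ \{|\Cmax| > |\Ckn|\} \subseteq \{\Cmax \cap \{1, \ldots, k_N\} = \emptyset\}, \]
and it is enough to bound the probability of the right-hand event.

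Next I would exploit that the distribution of $\Gc^d(N, p)$ is invariant under all permutations of the vertex labels, so (fixing any symmetric tie-breaking rule for the giant component) the set $\Cmax$ is, conditionally on $|\Cmax| = m$, uniformly distributed over all $m$-subsets of $\{1, \ldots, N\}$. Consequently
\[ \P\big(\Cmax \cap \{1, \ldots, k_N\} = \emptyset \,\big|\, |\Cmax|\big) = \binom{N-|\Cmax|}{k_N}\Big/\binom{N}{k_N} \le \Big(1-\tfrac{|\Cmax|}{N}\Big)^{k_N}. \]
Splitting according to whether $|\Cmax| \ge \overline{\rho_\l}\,N/2$ and applying Theorem~\ref{thm_largeDev} with $\omega$ of order $\sqrt{\epsilon^3 N}$ on the complementary event, I arrive at a bound of the form
\[ \P(|\Cmax|>|\Ckn|) \le \exp(-c_1 \epsilon^3 N) + \exp(-c_2\, \overline{\rho_\l}\, k_N). \]
Using that $\overline{\rho_\l}$ is of order $\epsilon$ in the near-critical regime (by Taylor-expanding~\eqref{rho_alt}), together with $k_N = N^\gamma$ for $\gamma \in (2\alpha-1,\alpha)$ and the hypothesis $\epsilon^3 N^\tau \to \infty$, both terms can be absorbed into $\exp(-C k_N)$ for a suitable constant $C$.

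The hard part will be the final bookkeeping. The second term decays like $\exp(-c_2 \epsilon k_N)$, which matches $\exp(-Ck_N)$ only if $C$ is allowed to depend on $\epsilon$; and the first term requires $\epsilon^3 N$ to dominate $k_N = N^\gamma$. Checking that the intersection of $\gamma \in (2\alpha-1,\alpha)$ from~\eqref{choiceKn} with the constraint $\gamma \le 1-\tau/3$ (needed so that $\epsilon^3 N \ge N^\gamma$ under $\epsilon \gg N^{-\tau/3}$) is compatible with~\eqref{cond:tau} is precisely the delicate step, rather than any conceptually deep ingredient.
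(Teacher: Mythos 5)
Your proposal is correct and follows essentially the same route as the paper: both reduce the event to $\{\Cmax\cap\{1,\dots,k_N\}=\emptyset\}$, bound its probability by roughly $(1-|\Cmax|/N)^{k_N}$ via the exchangeability of vertex labels, and control $|\Cmax|$ with Theorem~\ref{thm_largeDev}. Your hypergeometric computation is in fact a cleaner justification of the step the paper merely asserts (namely $\P(j\notin\Cmax\mid 1,\dots,j-1\notin\Cmax)\le\P(1\notin\Cmax)$), and the $\epsilon$-dependence of the final constant that you flag is equally present, though unremarked, in the paper's own bound.
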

\begin{proof}
Note that
\begin{align*}
&\P\left( |\Cmax|>\Ckna \right)
= \P\left( \forall 1\leq i\leq k_N\colon i\notin\Cmax \right)\\
=& \P\left( 1\notin\Cmax \right)\P\left( 2\notin\Cmax\mid 1\notin\Cmax \right)\cdots \P\left( k_N\notin\Cmax\mid 1,\dots,k_{N-1}\notin\Cmax \right).
\end{align*}
For all $\xi>0$, it holds 
\begin{align*}
&\P\left( 1\notin\Cmax \right)\\
=& \P\left( 1\notin\Cmax,\, \left||\Cmax|-\overline{\rho_\l}N\right|>\xi N \right) + \P\left( 1\notin\Cmax,\, \left||\Cmax|-\overline{\rho_\l}N\right|\leq\xi N \right)\\
\leq& \P\left(  \left||\Cmax|-\overline{\rho_\l}N\right|>\xi N \right) + \P\left( 1\notin\Cmax,\, \left||\Cmax|-\overline{\rho_\l}N\right|\leq\xi N \right).
\end{align*}
Let us set $0<\xi<\overline{\rho_\l}$ and apply the large deviation bound given in Theorem \ref{thm_largeDev} with $\omega=\xi \sqrt{\vep^3 N}$, then the first term turns to be 

\begin{equation*}
\P\left(  \left||\Cmax|-\overline{\rho_\l}N\right|>\xi N \right)\leq \exp(-c\xi^2\epsilon^3 N)
\end{equation*}
for some constant $c>0$. This converges to $0$ by assumption on $\vep$.

On the other hand, note that on the event $\left\lbrace \overline{\rho_\l}N-\xi N \leq |\Cmax| \leq \overline{\rho_\l}N + \xi N \right\rbrace$, there are at most $N-(\overline{\rho_\l}N-\xi N)$ vertices which are not contained in $\Cmax$. Then we can bound the second term by
\begin{equation*}
\P\left( 1\notin\Cmax,\, \left||\Cmax|-\overline{\rho_\l}N\right|\leq\xi N \right)
\leq 1-\left(\overline{\rho_\l}-\xi \right).
\end{equation*}
Therefore, we obtain 
\begin{align*}
\P\left( 1\notin\Cmax \right) &\leq 1-\left(\overline{\rho_\l}-\xi \right)+\exp(-c\epsilon^3\xi^2 N),\\
\P\left( j\notin\Cmax\mid 1,\dots,j-1\notin\Cmax \right) &\leq \P\left( 1\notin\Cmax \right) ,
\end{align*}
for all $2\leq j\leq k_N$. Consequently, there exists an appropriate $C>0$ such that 
\begin{align*}
\P\left( |\Cmax|>\Ckna \right)
\leq& \Big( \P\left( 1\notin\Cmax \right) \Big)^{k_N}\\
\leq& \left( 1-\left(\overline{\rho_\l}-\xi \right)+\exp(-c\epsilon^3\xi^2N) \right)^{k_N}\\
\leq& \exp\left( -C(\overline{\rho_\l}-\xi) \right)^{k_N}\\
\leq& \exp(-Ck_N).
\end{align*}
\end{proof}

\section{A moderate deviations principle for $\Ckna$}
In this section we are going to show an MDP for $\Ckna$, in other words, the size of the union of the connected components, if we set the first $k_N$ vertices active in the exploration process and $k_N$ is of the right size. We will prove this MDP using the MDP for the martingale part of the exploration process derived in Theorem \ref{MDPmart}. In the next section we will see, that if $k_N$ is large enough, $\Ckna$ and $|\Cmax|$ only differ by an amount that is negligible on the moderate deviations scale.

\begin{theorem}\label{MDPCkn}
Consider a probability for the presence of a hyperedge as in \eqref{pdef} with $\l=1+\epsilon$ as in Theorem \ref{main_theorem}. Take $k_N= N^{\gamma}$ for $2\a-1<\gamma<\a$ as in \eqref{choiceKn}. Then for any $\frac 12 < \a <1$ and $y>0$ we have that
$$
\lim_{N \to \infty}\frac 1 {N^{2\a-1}}\log \P\left(\left|\Ckna - \overline{\rho_\l}N\right| >y N^{\a}\right)= -J(y)
$$
where $J$ is given by \eqref{rategraph}, i.e.\
$$
J(y)= I\left( y\left( 1-\l_*\right) \right)=\frac{y^2\left( 1-\l_*\right)^2}{2c}.
$$
and $I(\cdot)$ as well as $c$ is given explicitly in Theorem \ref{MDPmart}.
\end{theorem}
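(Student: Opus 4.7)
The plan is to reduce the MDP for $\Ckna$ to the MDP for the martingale $\beta_{\gamma(N)} S_{\gamma(N)}$ already established in Theorem \ref{MDPmart}, where $\gamma(N) := \overline{\rho_\l} N + y N^\a$. The bridge is the first-passage identification $\Ckna = \min\{t : A_t = 0\}$ from Remark \ref{r:exp:CkSt}: the upper tail $\{\Ckna > \gamma(N)\}$ forces $A_{\gamma(N)} \ge 1$, while (since the increments of $A_t$ are $\ge -1$) the lower tail $\{\Ckna < \overline{\rho_\l} N - y N^\a\}$ forces $A_{\overline{\rho_\l} N - y N^\a} \le 0$. On the moderate-deviations scale the problem thus becomes the location of the sign of $A_{\gamma(N)}$.

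The next step is to pin down the centring via Taylor expansion of the deterministic skeleton. By \eqref{Approx.Proc}--\eqref{eq_Approx.errorbound} and \eqref{eq_trajectoryxt}, I may replace $A_{\gamma(N)}$ by $f(\gamma(N)) + \beta_{\gamma(N)} S_{\gamma(N)}$ up to an $O(\gamma(N)/N) = O(1)$ error. Since $g_{d,\l}(\overline{\rho_\l}) = 0$ by \eqref{rho_alt} and
$$
g'_{d,\l}(\overline{\rho_\l}) = -1 + \l(1-\overline{\rho_\l})^{d-1} = -(1-\l_*)
$$
by \eqref{def_lambda*}, Taylor expansion yields
$$
f(\gamma(N)) = N\, g_{d,\l}\big(\overline{\rho_\l} + yN^{\a-1}\big) = -(1-\l_*)\, yN^\a + O(N^{2\a-1}),
$$
and the correction is $o(N^\a)$ since $\a<1$, hence negligible at speed $N^{2\a-1}$. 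Combining this with \eqref{eq_simMDP1}--\eqref{eq_simMDP2}, the event $\{A_{\gamma(N)}\ge 0\}$ amounts, up to shifts of order $o(N^\a)$, to $\{\beta_{\gamma(N)} S_{\gamma(N)}/N^\a \ge (1-\l_*) y\}$. The upper bound from Theorem \ref{MDPmart} then gives
$$
\limsup_{N\to\infty} \frac{1}{N^{2\a-1}} \log \P(\Ckna > \gamma(N)) \le -I\big((1-\l_*)y\big) = -J(y),
$$
and the symmetric computation centred at $\overline{\rho_\l} N - yN^\a$ handles the lower tail.

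The hard part will be the matching lower bound, since $\{\Ckna > \gamma(N)\}$ requires $A_t>0$ for \emph{every} $t<\gamma(N)$, not merely at the endpoint. My idea is, for small $\delta,\delta'>0$, to intersect the Theorem \ref{MDPmart}-event $\{\beta_{\gamma(N)} S_{\gamma(N)} \ge ((1-\l_*)y+\delta)N^\a\}$, whose probability is at least $\exp(-N^{2\a-1}(J(y)+o_\delta(1)))$, with the Hoeffding--Azuma event $\{\max_{t\le \gamma(N)} |\beta_t S_t| \le N^{\a+\delta'}\}$, whose complement is at most $\exp(-cN^{2(\a+\delta')-1})$ by \eqref{max_doob} and hence super-exponentially small at speed $N^{2\a-1}$. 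On this intersection, the macroscopic positivity $x_t \gtrsim N$ for $t \le (1-\kappa)\overline{\rho_\l} N$ dwarfs the martingale, so $A_t>0$ is automatic on that range. In the critical window $[(1-\kappa)\overline{\rho_\l} N, \gamma(N)]$, the positive surplus $\delta N^\a$ at the endpoint, combined with the fact that the oscillation of $\beta_t S_t$ over sub-windows of length $O(N^\a)$ is $O(N^{\a/2})$ (by Lemma \ref{lem_conVara.s} together with another Hoeffding--Azuma estimate) --- much smaller than the surplus --- should keep $A_t$ strictly positive throughout. Letting $\delta,\delta'\downarrow 0$ recovers the sharp rate $J(y)$; the lower tail is handled symmetrically. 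The main technical difficulty to nail down is precisely this analysis in the critical window, where $x_t$ itself is only of order $N^\a$ and so a finer short-time oscillation bound on $\beta_t S_t$ is needed.
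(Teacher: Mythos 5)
Your reduction for the upper tail's upper bound (Taylor-expanding $x_{m_y^+}$ around $\overline{\rho_\l}$, identifying $g_{d,\l}'(\overline{\rho_\l})=-(1-\l_*)$, and applying Theorem \ref{MDPmart} at the endpoint) is exactly the paper's Lemma \ref{lem_uppermy+}, and the lower bound for the lower tail via $\{A_{m_y^-}\le 0\}\subseteq\{\Ckna\le m_y^-\}$ is also fine. However, there are two genuine gaps. First, your claim that $\{\Ckna<\overline{\rho_\l}N-yN^\a\}$ ``forces $A_{\overline{\rho_\l}N-yN^\a}\le 0$'' is false: the modified process $A_t=A_{t-1}+\eta_t-1$ does not absorb at $0$, so after $\Ckna$ the walk can (and typically does, by starting new components) climb back above zero, and the implication only goes the other way. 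The event $\{\Ckna<m_y^-\}$ is $\{\exists\, m<m_y^-:A_m=0\}$, a union over roughly $N$ times, and the paper's Lemma \ref{lemma5.4} handles it by a union bound over $m$ in a window $[-yN^\zeta+\overline{\rho_\l}N,\,m_y^-]$ (applying the martingale MDP at each such $m$ with its own speed $N^{2\delta-1}$) plus the large-deviation bound of Theorem \ref{thm_largeDev} for earlier hitting times; this is precisely where the strengthened hypothesis \eqref{cond:tau} on $\epsilon^3N^\tau$ is consumed. Your proposal omits this step entirely.

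Second, for the lower bound of the upper tail you attack the first-passage event $\{A_t>0\ \forall t\le\gamma(N)\}$ directly via an endpoint surplus $\delta N^\a$ plus oscillation control of $\beta_tS_t$ on a critical window. This is a genuinely different route from the paper, which avoids the first-passage problem altogether: conditioned on $\Cmax\subseteq\Ckn$ (which holds up to $\exp(-Ck_N)$ by Proposition \ref{prop_Cmax>Ckn}), the Bollob\'as--Riordan identity \eqref{eq_sizeCc10} gives $|\Cc_{1,0}|=t_1^*+\tilde A_{t_1^*}/(1-\l_*)$ with $t_1^*=\overline{\rho_\l}N$ deterministic, so the lower bound reduces to a single application of Theorem \ref{MDPmart} at the fixed time $t_1^*$. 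Your alternative could in principle work, but as written it is incomplete: the window where $x_t$ is only $O(N^\a)$ has length of order $\kappa N$, not $O(N^\a)$, and at the probability scale $\exp(-CN^{2\a-1})$ Hoeffding--Azuma only controls the martingale's oscillation there to $O(\sqrt{\kappa}\,N^\a)$, so one must couple $\kappa$ to $\delta$ and then justify the interchange of limits $\delta,\kappa\downarrow 0$ with $N\to\infty$; your stated bound $O(N^{\a/2})$ on sub-windows of length $O(N^\a)$ does not by itself cover this. You flag this yourself as the main difficulty, and it is exactly the part that remains unproven; the paper's use of \eqref{eq_sizeCc10} is the missing ingredient that makes the lower bound clean.
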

\begin{rem}
Due to the topological structure of $\R$ Theorem \ref{MDPCkn} is indeed a moderate deviations principle, see \cite[Section 3.7]{dembozeitouni}.
\end{rem} 

We will break up the proof of Theorem \ref{MDPCkn} into several lemmas.

\subsection{The upper bounds}\label{sec_upper}
\begin{lemma}\label{lem_uppermy+}
In the situation and with the notation of Theorem \ref{MDPCkn} we have for any $\frac 12 < \a <1$ and  $y>0$ that
$$
\lim_{N \to \infty}\frac 1 {N^{2\a-1}}\log \P\left(\Ckna >y N^{\a}+ \overline{\rho_\l}N\right)\le -J(y).
$$
\end{lemma}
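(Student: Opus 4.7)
The plan is to reduce the event $\{\Ckna > yN^\alpha + \overline{\rho_\l}N\}$ to a moderate deviation event for the martingale $(S_t)$ of \eqref{mart} and then invoke Theorem~\ref{MDPmart}. Set $T := \lceil \overline{\rho_\l}N + yN^\alpha \rceil$. Since $\Ckna$ equals the first time the exploration process hits $0$ (Remark~\ref{r:exp:CkSt}(2)), we have the deterministic inclusion $\{\Ckna > T\} \subseteq \{A_T > 0\}$, so it suffices to upper-bound $\P(A_T > 0)$.

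By \eqref{Approx.Proc}--\eqref{eq_Approx.errorbound}, $A_T = x_T + \beta_T S_T + \Oc(1)$ uniformly in $T \le N$, and by \eqref{eq_trajectoryxt}, $x_T = N g_{d,\l}(T/N) + \Oc(1)$. The function $g_{d,\l}$ from \eqref{eq_def.g} vanishes at $\overline{\rho_\l}$ (this is \eqref{rho_alt}), and a direct differentiation of \eqref{eq_def.g} combined with \eqref{rho_alt} and \eqref{def_lambda*} gives
\[
g_{d,\l}'(\overline{\rho_\l}) = -1 + \l(1-\overline{\rho_\l})^{d-1} = -(1-\l_*).
\]
Smoothness of $g_{d,\l}$ on a neighborhood of $\overline{\rho_\l}$ then yields by Taylor's theorem
\[
x_T = -(1-\l_*)\, y\, N^\alpha + \Oc(N^{2\alpha-1}).
\]
Hence the event $\{A_T > 0\}$ rewrites as $\{\beta_T S_T > (1-\l_*)\, y\, N^\alpha - \Oc(N^{2\alpha-1}) - \Oc(1)\}$, and since $\alpha < 1$ forces $N^{2\alpha-1} = o(N^\alpha)$, this is $\{\beta_T S_T/N^\alpha > (1-\l_*) y - o(1)\}$ on the moderate-deviations scale.

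Now Theorem~\ref{MDPmart} applied with $\zeta = y$ provides an MDP for $\beta_T S_T / N^\alpha$ at speed $N^{2\alpha-1}$ with rate $I(x) = x^2/(2c)$. Applying the MDP upper bound to the closed half-line $[(1-\l_*)y - \epsilon, \infty)$, then sending $N \to \infty$ and $\epsilon \downarrow 0$ (using continuity of $I$ at $(1-\l_*)y$), delivers
\[
\limsup_{N \to \infty} \frac{1}{N^{2\alpha-1}} \log \P(\Ckna > T)
\le -I\bigl((1-\l_*)\, y\bigr)
= -\frac{(1-\l_*)^2 y^2}{2c}
= -J(y),
\]
which is the stated bound. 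The main substantive step is the identification $g_{d,\l}'(\overline{\rho_\l}) = -(1-\l_*)$, which is precisely what produces the factor $(1-\l_*)^2$ in the rate function \eqref{rategraph}; everything else -- the approximation \eqref{eq_Approx.errorbound}, the order of the Taylor remainder, and the continuity argument -- contributes only lower-order or routine perturbations thanks to $\tfrac12 < \alpha < 1$.
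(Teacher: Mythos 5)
Your proposal is correct and follows essentially the same route as the paper: bound $\P(\Ckna>m_y^+)$ by $\P(A_{m_y^+}>0)$, replace $A$ by $\tilde A=x+\beta S$, Taylor-expand $x_{m_y^+}=Ng_{d,\l}(\overline{\rho_\l}+yN^{\a-1})+\Oc(1)=-(1-\l_*)yN^\a+o(N^\a)$ using $g_{d,\l}(\overline{\rho_\l})=0$ and $g_{d,\l}'(\overline{\rho_\l})=-(1-\l_*)$, and invoke Theorem \ref{MDPmart}. Your handling of the $o(1)$ shift via the closed half-line and continuity of $I$ is in fact slightly more careful than the paper's.
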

\begin{proof}
Let $y N^{\a}+ \overline{\rho_\l}N=:m_y^+$. Firstly, recall the approximation process $\tilde A_t:= x_t+ \beta_t S_t$, and the trajectory of $x_t$ given by \eqref{eq_trajectoryxt}, and we see
\begin{equation}\label{eq_precisext}
 x_{m_y^+}
= f(m_y^+)+\Oc(1)
= Ng_{d,\l}(\frac{m_y^+}{N})+\Oc(1),
\end{equation}
where $g_{d,\l}$ is given by \eqref{eq_def.g}:
\begin{align*}
g_{d,\l}(x)= 1-x-\exp\left(-\frac \l{d-1}\left(1-(1-x)^{d-1}\right)\right).
\end{align*}
We define
\begin{equation*}
h( y)
=\exp\left(-\frac \l{d-1}\left(1-(1-yN^{\a-1}-\overline{\rho_\l})^{d-1}\right)\right).
\end{equation*}
Note that
\begin{align*}
h'(y)
= \exp\left(-\frac \l{d-1}\left(1-(1-yN^{\a-1}-\overline{\rho_\l})^{d-1}\right)\right) \left(-\l (1-yN^{\a-1}-\overline{\rho_\l})^{d-2}N^{\a-1} \right) ,
\end{align*}
and
\begin{align*}
h^{''}(y)
=& \exp\left(-\frac \l{d-1}\left(1-(1-yN^{\a-1}-\overline{\rho_\l})^{d-1}\right)\right) \big(\l^2 (1-yN^{\a-1}-\overline{\rho_\l})^{2(d-2)}N^{2\a-2}\\ 
&+\l(d-2) (1-yN^{\a-1}-\overline{\rho_\l})^{d-3}N^{2\a-2} \big) .
\end{align*}
Thus we may expand $h$ in $y=0$ to obtain
\begin{align*}
 h (y) 
=& \exp\left(-\frac \l{d-1}\left(1-(1-\overline{\rho_\l})^{d-1}\right)\right) \left(1-\l (1-\overline{\rho_\l})^{d-2} yN^{\a-1} \right) + \Oc(N^{2\a-2}) .
\end{align*}
Inserting the above calculation into $g_{d,\l}$ yields
\begin{align*}
 & g_{d,\l}(yN^{\a-1}+\overline{\rho_\l})\\
=& 1-yN^{\a-1}-\overline{\rho_\l}- h(y)\\
=& -yN^{\a-1}+\exp\left(-\frac \l{d-1}\left(1-(1-\overline{\rho_\l})^{d-1}\right)\right) \left( \l (1-\overline{\rho_\l})^{d-2}yN^{\a-1} \right)+ \Oc(N^{2\a-2}) \\
=& -yN^{\a-1}+ \l (1-\overline{\rho_\l})^{d-1}yN^{\a-1} + \Oc(N^{2\a-2}),
\end{align*}
where we used the fact that $g_{d,\l}(\overline{\rho_\l})=0$ by \eqref{rho_alt}. Therefore, we conclude by the definition of $x_t$ in \eqref{eq_precisext} 
\begin{equation}\label{eq_xmy+}
\begin{split}
x_{m_y^+}
=& N g_{d,\l}(yN^{\a-1}+\overline{\rho_\l}) + \Oc(1) \\
=& -y\left( 1-\l(1-\overline{\rho_\l})^{d-1} \right)N^\a + \Oc(N^{2\a-1})+ \Oc(1) \\
=& -y\left( 1-\l_*\right)N^\a + o(N^\a) + \Oc(1),
\end{split}
\end{equation}
where in the last line we used the abbreviation $\l_*= \l \left( 1- \rho_{d,\l} \right)^{d-1}$ in \eqref{def_lambda*}. Finally, recall that $\tilde{A}$ is the approximation process given by $\tilde{A}_t=x_t+\beta_tS_t$ in \eqref{Approx.Proc} and $\E\tilde{A}_t=x_t$ (as well as \eqref{eq_simMDP1}, \eqref{eq_simMDP2}). We observe 
\begin{equation*}
\begin{split}
\P\left(\Ckna >m_y^+ \right)
=& \P\left(\forall m\le m_y^+:\; A_m>0 \right)\\
\le& \P\left( A_{m_y^+}>0 \right)\\
=& \P\left( \frac{A_{m_y^+}-\E A_{m_y^+}}{N^\a}>-\frac{\E A_{m_y^+}}{N^\a} \right)\\
\sim& \P\left( \frac{\tilde{A}_{m_y^+}-\E \tilde{A}_{m_y^+}}{N^\a}>-\frac{\E\tilde{A}_{m_y^+}}{N^\a} \right)\\
=& \P\left(\frac{\beta_{m_y^+} S_{m_y^+}}{N^\a}> \frac{-x_{m_y^+} }{N^\a} \right)\\
=& \P\left(\frac{\beta_{m_y^+} S_{m_y^+}}{N^\a}> y\left( 1-\l_*\right) + o(1) \right).
\end{split}
\end{equation*}
Applying Theorem \ref{MDPmart}, it yields
\begin{align*}
&\lim_{N \to \infty}\frac 1 {N^{2\a-1}}\log \P\left(\Ckna >m_y^+ \right) \\
\leq& \lim_{N \to \infty}\frac 1 {N^{2\a-1}}\log \P\left(\frac{\beta_{m_y^+} S_{m_y^+}}{N^\a}> y\left( 1-\l_*\right) + o(1)   \right)\\
=& -I\left(y\left( 1-\l_*\right) \right) = -J(y).
\end{align*}
\end{proof}

\begin{lemma}\label{lemma5.4}
In the situation and with the notation of Theorem \ref{MDPCkn} we have for any $\frac 12 < \a <1$ and $y>0$ that
$$
\lim_{N \to \infty}\frac 1 {N^{2\a-1}}\log \P\left(\Ckna < -y N^{\a}+\overline{\rho_\l}N \right)\le -J(y).
$$
\end{lemma}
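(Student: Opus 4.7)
\emph{Proof plan.} My proof would mirror that of Lemma \ref{lem_uppermy+} with $m_y^-:=\overline{\rho_\l}N-yN^\a$ in place of $m_y^+$. The identical Taylor expansion of $g_{d,\l}$ at $\overline{\rho_\l}-yN^{\a-1}$ (only the sign of $y$ is changed) yields
$$
x_{m_y^-}=y(1-\l_*)N^\a + o(N^\a),
$$
so that $x_{m_y^-}$ is now \emph{positive} of order $N^\a$. Since $|\Ckn|$ is the first hitting time of $0$ by $(A_t)$,
$$
\{\Ckna<m_y^-\}\subseteq\Big\{\min_{k_N\le t\le m_y^-}A_t\le 0\Big\},
$$
and the uniform approximation \eqref{eq_Approx.errorbound} (combined with $m_y^-/N=o(N^\a)$) reduces the task to bounding
$$
\P\Big(\exists\,t\in[k_N,m_y^-]\colon\beta_t S_t\le -x_t+o(N^\a)\Big).
$$

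I would then split $[k_N,m_y^-]$ into three regimes by inserting thresholds $T_1<T_2$ with $x_{T_1}=x_{T_2}=N^\b$ for some fixed $\b\in(\a,1)$. In the \emph{bulk} $[T_1,T_2]$, $x_t\ge N^\b$, and the required cancellation is of large-deviations order: the exponential moment bound \eqref{eq_BoundMGF} together with a union bound over $t$ produces a probability of order $\exp(-\Theta(N^{2\b-1}))$, negligible compared with $\exp(-N^{2\a-1}J(y))$. In the \emph{late} window $[T_2,m_y^-]$, the identity $g'_{d,\l}(\overline{\rho_\l})=-(1-\l_*)<0$ forces the trajectory $x_t$ to attain its minimum at $t=m_y^-$, so the event is contained in $\{\exists t\in[T_2,m_y^-]\colon\beta_t S_t\le -y(1-\l_*)N^\a+o(N^\a)\}$. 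A union bound over the polynomial-length window together with Theorem \ref{MDPmart} then yields the upper bound $\exp(-N^{2\a-1}(J(y)+o(1)))$, which is precisely the desired rate.

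The main obstacle, and the reason the slightly stronger condition \eqref{cond:tau} is needed, lies in the \emph{early} regime $[k_N,T_1]$, where the deterministic trajectory satisfies $x_t\asymp\epsilon t$ and is thus \emph{small}; its infimum on the window is $x_{k_N}\asymp\epsilon k_N$. Applying the Hoeffding--Azuma estimate \eqref{max_doob} to $(S_t)$ gives a bound of order $\exp(-c\epsilon^2 k_N)$ for $\P(\exists t\in[k_N,T_1]\colon\beta_t S_t\le -\epsilon k_N+o(N^\a))$, and for this bound to be negligible compared with $\exp(-N^{2\a-1}J(y))$ one needs $\epsilon^2 k_N\gg N^{2\a-1}$, i.e.\ $\epsilon^2 N^{\gamma-2\a+1}\to\infty$ when $k_N=N^\gamma$ with $\gamma\in(2\a-1,\a)$. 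Feeding in the best available lower bound on $\epsilon$ from the hypothesis $\epsilon^3 N^\tau\to\infty$ and optimizing jointly over admissible $\gamma$ and over the bulk threshold $\b>\a$ produces precisely the sharper constraint \eqref{cond:tau}; this balancing argument is the technical heart of the lemma.
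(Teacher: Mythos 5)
Your treatment of the window adjacent to $\overline{\rho_\l}N$ (union bound over $t$ plus Theorem \ref{MDPmart}, using that $x_t$ is decreasing there with $x_{m_y^-}=y(1-\l_*)N^\a+o(N^\a)$) matches the paper's argument, which splits that window into the sets $M_1$, $M_2$ according to whether $x_{m_\delta}/N^\a\to\infty$ or stays bounded. The genuine gap is in your \emph{early regime}. There the required bound $\exp(-c\,\epsilon^2k_N)=o\bigl(\exp(-N^{2\a-1}J(y))\bigr)$, i.e.\ $\epsilon^2k_N\gg N^{2\a-1}$, is simply not available under \eqref{cond:tau}: since $k_N=N^\gamma<N^\a$ and $\epsilon$ may be as small as $N^{-\tau/3}$, one has $\epsilon^2k_N\ll N^{\a-2\tau/3}$, and $\a-2\tau/3\ge 2\a-1$ forces $\a\le 1-2\tau/3$, which fails for all $\a\ge 2/3$ (e.g.\ $\a=0.8$: $\epsilon^2k_N\lesssim N^{0.54}$ versus $N^{2\a-1}=N^{0.6}$). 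No joint optimization over $\gamma$ and $\b$ repairs this, so your claim that the balancing argument ``produces precisely'' \eqref{cond:tau} does not hold; the martingale/Hoeffding--Azuma route through the early times is a dead end at this level of precision.

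The paper avoids the early (and your bulk) regime altogether. It writes
$\P(\Ckna<m_y^-)\le\sum_{m=\overline{\rho_\l}N-yN^\zeta}^{m_y^-}\P(A_m=0)+\P\bigl(\Ckna<\overline{\rho_\l}N-yN^\zeta\bigr)$
for a suitable $\zeta\in(\a,1)$, and controls the second term \emph{not} via the exploration martingale but via the a priori large-deviation estimate for $|\Cmax|$ (Theorem \ref{thm_largeDev}) combined with the containment $\Cmax\subseteq\Ckn$, which fails with probability at most $\exp(-Ck_N)$ by Proposition \ref{prop_Cmax>Ckn} (negligible since $k_N\gg N^{2\a-1}$). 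Choosing $\omega=y\sqrt{\epsilon^3N^{2\zeta-1}}$ gives $\P(|\Cmax|<\overline{\rho_\l}N-yN^\zeta)\le\exp(-\Omega(\epsilon^3N^{2\zeta-1}))$, and \eqref{cond:tau} is exactly what guarantees that some $\zeta<1$ exists with $\epsilon^3N^{2\zeta-2\a}\to\infty$. You should replace your early and bulk regimes by this appeal to Theorem \ref{thm_largeDev} and Proposition \ref{prop_Cmax>Ckn}; as written, your argument does not close.
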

\begin{proof}
Analogously to $m_y^+$, let us define $m_y^-:=-y N^{\a}+\overline{\rho_\l}N$. We observe for each $\zeta$ with
$\frac{1}{2}<\zeta<1$ (which will be chosen more explicitly later) we obtain
\begin{align}
\P\left(\Ckna < m_y^- \right)
=& \P\left(\exists m<m_y^-: A_m=0\right) \nonumber \\
\leq& \sum_{m= -y N^{\zeta}+\overline{\rho_\l}N}^{m_y^-}\P\left( A_m=0\right) +  \P\left(\exists m< -y N^{\zeta}+\overline{\rho_\l}N \colon A_m=0\right) \nonumber\\
=& \sum_{m= -y N^{\zeta}+\overline{\rho_\l}N}^{m_y^-}\P\left( A_m=0\right) +  \P\left(\Ckna < -y N^{\zeta}+\overline{\rho_\l}N\right). \label{Upper2Est1}
\end{align}
In particular, the second term in \eqref{Upper2Est1} is negligible on the chosen moderation deviation scale. 
We pick $\omega=y\sqrt{\epsilon^3 N^{2\zeta-1}}$ and one can verify that $\omega=\Oc(\sqrt{\epsilon^3 N})$.

Indeed, it is implied by Theorem \ref{thm_largeDev} that for some constant $C>0$
\begin{align*}
& \lim_{N \to \infty}\frac 1 {N^{2\a-1}}\log\P(|\Cmax|<  -y N^{\zeta}+\overline{\rho_\l}N )\\
\leq & \lim_{N \to \infty}\frac 1 {N^{2\a-1}}\log\P(\left| |\Cmax|-\overline{\rho_\l}N \right|>  y N^{\zeta} )\\
\leq& \lim_{N \to \infty}\frac 1 {N^{2\a-1}}\log\P(\left| |\Cmax|-\overline{\rho_\l}N \right|> \omega\sqrt{N/\epsilon} )
\leq \lim_{N \to \infty}-\frac{Cy^2\epsilon^3N^{2\zeta-1} }{N^{2\a-1}}
=-\infty,
\end{align*}
Here, for the last equality to hold we need to choose $\zeta$ appropriately. This is done as follows: Recall that we require the condition \eqref{cond:tau}, i.e.
$ \epsilon^3 N^\tau\to\infty$ with $\tau=\min\lbrace \frac{1}{2}, 2-2\a-\iota \rbrace$ for a given $\iota>0$ small enough. Then we distinguish the following cases:  
\begin{enumerate}
\item If $2-2\a-\iota<\frac{1}{2}$, i.e.\ if $\a>3/4-\frac{\iota}{2}$, we get $\epsilon^3N^{2-2\a-\iota}\to\infty$. We set $\zeta> \max\lbrace 1-\frac{\iota}{2},\a\rbrace$, and thus obtain $2\zeta-2\a> 2-2\a-\iota$. This ensures
$$
\epsilon^3N^{2\zeta-2\a} \to\infty.
$$
\item If $2-2\a-\iota>\frac{1}{2}$ i.e.\ if $\a<3/4-\frac{\iota}{2}$, we see $\epsilon^3N^{1/2}\to\infty$. In this case define $\zeta> \a+1/4$, which implies $2\zeta-2\a> \frac{1}{2}$. Hence, it follows $\epsilon^3N^{2\zeta-2\a} \to\infty$.
\item If $2-2\a-\iota=\frac{1}{2}$, any of the above choices for $\zeta$ can be applied.
\end{enumerate}

Now we fix $y$ and $\zeta$ satisfying the above conditions.
For $m\in {[ -y N^{\zeta}+\overline{\rho_\l}N, m_y^- ]}$, we can find a $\delta$ with $\alpha\leq \delta\leq \zeta$ such that it holds $m=m_\delta:=-y N^{\delta}+\overline{\rho_\l}N$.
Note that $\delta$ will depend on $N$.

We distinguish again the following two cases.
Firstly, consider the set $M_1$ and $M_2$ defined below (to simplify notation, assume that sets $M_1$ and $M_2$ defined below are sets of integers, to avoid the irrelevant rounding):
$$
M_1:=\left\lbrace m\in {[ -y N^{\zeta}+\overline{\rho_\l}N, m_y^- ]} \colon  \frac{m-\overline{\rho_\l}N}{m_y^- -\overline{\rho_\l}N}\overset{N\to\infty}{\longrightarrow}\infty \right\rbrace.
$$
Note that $M_1$ contains exactly those $m$ for which 
$\liminf\delta=\liminf\delta_N>\alpha$, when we write $  m$ in the form $m_\delta$ as above.
Applying Theorem \ref{MDPmart} for $m_{\delta}= -y N^{\delta}+\overline{\rho_\l}N$ in the role of $\gamma(N)$ yields that $\beta_{m_{\delta}}S_{m_{\delta}}$ satisfies an MDP with speed $N^{2\delta-1}$ and rate function $I(x)=x^2/2c$, where $c$ was given explicitly in \eqref{eq_c}.

By \eqref{eq_simMDP1} and \eqref{eq_simMDP2}, we apply Theorem \ref{MDPmart} to the summands in the first term of \eqref{Upper2Est1} to obtain for all $m=m_\delta\in M_1$ that
\begin{align*}
&\lim_{N \to \infty}\frac 1 {N^{2\a-1}}\log \P\left( A_{m_\delta}=0\right)
\leq \lim_{N \to \infty}\frac 1 {N^{2\a-1}}\log \P\left( A_{m_\delta}\leq0\right)\\
\sim& \lim_{N \to \infty}\frac 1 {N^{2\a-1}}\log \P\left( \tilde{A}_{m_\delta}\leq 0\right)\\
\leq& \lim_{N \to \infty}\frac 1 {N^{2\a-1}}\log \P\left(\frac{ \beta_{m_\delta}S_{m_\delta}}{N^\delta}\leq  \frac{-x_{m_\delta} }{N^\delta} \right)\\
\leq& \lim_{N \to \infty}\frac {N^{2\delta-1}}{N^{2\a-1}} \left[ \frac{1}{{N^{2\delta-1}}}\log \P\left( \frac{\beta_{m_\delta}S_{m_\delta}}{N^\delta} \leq -\frac{x_{m_\delta}}{N^\delta} \right)\right]\\
=& -\infty.
\end{align*}
The last step follows, since $x_{m_\delta}/N^\delta \in \R$ such that the MDP for $\beta_{m_\delta}S_{m_\delta}$ holds. Indeed, we can expand $x_{m_\delta}$ analogously to \eqref{eq_xmy+} to see that $x_{m_\delta}$ is of the order $N^\delta$. 

Secondly, define 
$$
M_2:=\left\lbrace m\in {[ -y N^{\zeta}+\overline{\rho_\l}N, m_y^- ]} \colon  \frac{m-\overline{\rho_\l}N}{m_y^- -\overline{\rho_\l}N}\overset{N\to\infty}{\longrightarrow} -\mathrm{const.} \right\rbrace.
$$
Note that $M_2$ contains exactly those $m$ for which 
$\lim\delta=\lim\delta_N=\alpha$, when we write $m$ in the form $m_\delta$ as above. Applying Theorem \ref{MDPmart} together with \eqref{eq_simMDP1}, \eqref{eq_simMDP2}, it holds for all $m=m_\delta\in M_2$  that 
\begin{align*}
&\lim_{N \to \infty}\frac 1 {N^{2\a-1}}\log \P\left( A_{m_\delta}=0\right)
\leq \lim_{N \to \infty}\frac 1 {N^{2\a-1}}\log \P\left( A_{m_\delta}\leq0\right)\\
\sim& \lim_{N \to \infty}\frac 1 {N^{2\a-1}}\log \P\left( \tilde{A}_{m_\delta}\leq 0\right)\\
\leq& \lim_{N \to \infty}\frac {N^{2\delta-1}}{N^{2\a-1}} \left[ \frac{1}{{N^{2\delta-1}}}\log \P\left( \frac{\beta_{m_\delta}S_{m_\delta}}{N^\delta} \leq -\frac{x_{m_\delta}}{N^\delta} \right)\right]\\
=& -I\left(-y\left( 1-\l_*\right) \right) = -J(y).
\end{align*}

Therefore, we conclude
\begin{align*}
 & \lim_{N \to \infty}\frac 1 {N^{2\a-1}}\log \P\left(\Ckna < m_y^- \right)\\
\leq& \lim_{N \to \infty}\frac 1 {N^{2\a-1}}\log \max_{m\in {[ -y N^{\zeta}+\overline{\rho_\l}N,\, m_y^- ]}}   \P\left( A_m=0\right)  \\
\leq& \lim_{N \to \infty}\frac 1 {N^{2\a-1}} \max_{m_\delta\in {[ -y N^{\zeta}+\overline{\rho_\l}N,\, m_y^- ]}} \log\P\left( A_{m_\delta}=0\right) 
\leq -J(y).
\end{align*}
\end{proof}

\subsection{The lower bounds}\label{sec_lower}
In order to derive the corresponding lower bounds for our MDP, we need some preparations to get familiar with the properties of the exploration processes. 

\subsubsection{Recap of exploration process}
Let
$$ 0=t_0<t_1<t_2<\cdots <t_l=N $$
enumerate the event $\lbrace t\colon A_t-A_{t-1}=-1 \rbrace$, which are the moments where the exploration starts with a new component of the hypergraph. 
Let
$$
\mathcal{C}_t = \big| \lbrace i\colon 0\leq i<t,\, A_i-A_{i-1}=-1 \rbrace \big|
$$
be the number of components which have been explored by time $t$.

Furthermore, we define the random walk
$$
X_t=A_t - \Cc_t. 
$$
Recall the definition of $\l=1+\epsilon$ in Theorem \ref{main_theorem} that we have $\epsilon=\Oc(1)$ as well as $ \epsilon^3 N^\tau\to\infty$ with $\tau=\min\lbrace \frac{1}{2}, 2-2\a-\iota \rbrace$ for a given $\iota>0$ small enough. (This implies $\epsilon^3N\to\infty$.) Next fix the function $\omega=\omega(N)$ satisfying
\begin{equation}\label{equa_omega}
\omega=\omega(N)\to \infty \qquad \text{and} \qquad \omega\leq C\sqrt{\epsilon^3N}
\end{equation}
for some constant $C>0$. We define
$$
t^*_0:=\omega\sqrt{N/\epsilon}.
$$

Furthermore, let us denote the number of components completely explored within time $t^*_0$ by
$$
Z:=-\inf\lbrace X_t\colon t\leq t^*_0\rbrace.
$$
Finally, on the event $\lbrace \Cmax\subseteq\Ckn\rbrace$, set
\begin{align*}
T_0 =& \inf\lbrace t\colon X_t=- Z\rbrace, \quad \mbox{and }\\
T_1 =& \inf\lbrace t\colon X_t=- Z-1\rbrace,
\end{align*}
where $T_0$ is the time point at which the last component within time $t^*_0$ is completely explored, while $T_1$ is the time when we finish exploring the next component. Simply by definition, we have $T_0 \leq t^*_0 <T_1$. Now, ignoring the irrelevant rounding to integers let us set
$$
t^*_1 = \overline{\rho_\l}N.
$$

\subsubsection{The component $\Cc_{1,0}$}
Denote the component by $\Cc_{1,0}$, which we explore from time $T_0+1$ to time $T_1$. 
Recall in the supercritical regime, there is a unique giant component. 
Bollob\'as and Riordan show in \cite[Lemma 16]{bollobas_riordan3} that  with probability $1-\exp\left(-\Omega(w^2)\right)$ (where $\omega$ is given in \eqref{equa_omega})  on the event $\lbrace \Cmax\subseteq\Ckn \rbrace$ the component $\Cc_{1,0}$ is the unique giant component of $\Gc^d(N,p)$. Moreover, 
the formula for the size $\Cc_{1,0}$ (conditioned on $\lbrace \Cmax\subseteq\Ckn \rbrace$) is then given by
\begin{equation}\label{eq_sizeCc10}
|\Cc_{1,0}|= t^*_1 + \frac{\tilde{A}_{t^*_1}}{1-\l_*}
\end{equation}
in terms of the constructed approximation process $\tilde{A}_t$, see \cite[(21)]{bollobas_riordan1}.

\begin{lemma}
In the situation and with the notation of Theorem \ref{MDPCkn} we have for any $\frac 12 < \a <1$ and  $y>0$ that
$$
\lim_{N \to \infty}\frac 1 {N^{2\a-1}}\log \P\left(\Ckna >y N^{\a}+ \overline{\rho_\l}N\right)\ge -J(y).
$$
\end{lemma}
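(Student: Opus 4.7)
The plan is to prove the matching lower bound by exploiting the explicit formula \eqref{eq_sizeCc10} for $|\Cc_{1,0}|$, which on a high-probability event coincides with $|\Cmax|$ and is dominated by $\Ckna$. First I introduce the good event
$$E_N := \{\Cmax \subseteq \Ckn\} \cap \{\Cc_{1,0} = \Cmax\}.$$
By Proposition \ref{prop_Cmax>Ckn} the complement of the first part has probability at most $\exp(-Ck_N) = \exp(-CN^\gamma)$, which is negligible on the MDP scale because $\gamma > 2\a-1$. By \cite[Lemma 16]{bollobas_riordan3}, on $\{\Cmax \subseteq \Ckn\}$ one has $\Cc_{1,0} = \Cmax$ except on an event of probability $\exp(-\Omega(\omega^2))$, with $\omega$ as in \eqref{equa_omega}. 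Choosing $\omega$ close to the upper limit $\sqrt{\epsilon^3 N}$ and using condition \eqref{cond:tau} (which forces $\epsilon^3 N^{2-2\a-\iota}\to\infty$, so $\omega^2\gg N^{2\a-1}$) we get $\P(E_N^c) = o\bigl(\exp(-LN^{2\a-1})\bigr)$ for every $L>0$.

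Second, since $\Cmax \subseteq \Ckn$ on $E_N$, we have $\Ckna \ge |\Cmax|$ there, so
$$\P\!\left(\Ckna > yN^\a + \overline{\rho_\l}N\right) \ge \P\!\left(|\Cmax| > yN^\a + \overline{\rho_\l}N,\; E_N\right).$$
On $E_N$, equation \eqref{eq_sizeCc10} gives
$$|\Cmax| = |\Cc_{1,0}| = t_1^* + \frac{\tilde A_{t_1^*}}{1-\l_*} = \overline{\rho_\l}N + \frac{\tilde A_{t_1^*}}{1-\l_*}.$$
Since $g_{d,\l}(\overline{\rho_\l})=0$ by \eqref{rho_alt}, we have $x_{t_1^*} = N g_{d,\l}(\overline{\rho_\l}) + \Oc(1) = \Oc(1)$, and so $\tilde A_{t_1^*} = \beta_{t_1^*} S_{t_1^*} + \Oc(1)$. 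The event therefore reduces to
$$\Bigl\{\beta_{t_1^*} S_{t_1^*}/N^\a > y(1-\l_*) + o(1)\Bigr\}$$
(here $1-\l_*>0$), up to the negligible correction $\P(E_N^c)$.

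Third, I apply Theorem \ref{MDPmart} with $\zeta = 0$, so that $\gamma(N) = \overline{\rho_\l}N = t_1^*$: the MDP for $\beta_{t_1^*} S_{t_1^*}/N^\a$ holds with speed $N^{2\a-1}$ and rate function $I$. Using the large deviation lower bound on the open half-line $(y(1-\l_*)+\delta,\infty)$ for arbitrary $\delta>0$, letting $\delta\downarrow 0$ and exploiting continuity of $I$ at $y(1-\l_*)$, we obtain
$$\liminf_{N\to\infty} \frac{1}{N^{2\a-1}} \log \P\!\left(\beta_{t_1^*}S_{t_1^*}/N^\a > y(1-\l_*) + o(1)\right) \ge -I\!\left(y(1-\l_*)\right) = -J(y),$$
which combined with the estimate on $\P(E_N^c)$ yields the required lower bound.

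The main difficulty is checking that all the error terms stacked in the reduction (the approximation $A_t \approx \tilde A_t$, the asymptotics $x_{t_1^*} = \Oc(1)$, the identification $|\Cmax| = |\Cc_{1,0}|$, and the inclusion $\Cmax \subseteq \Ckn$) are controlled on the exponential scale $N^{2\a-1}$. In particular, the step requiring $\omega^2 \gg N^{2\a-1}$ simultaneously with $\omega \le C\sqrt{\epsilon^3 N}$ is exactly where the tightened condition \eqref{cond:tau} (rather than the more natural $\tau = \min\{1/2, 2-2\a\}$) becomes essential, as flagged in the remarks following Theorem \ref{main_theorem}.
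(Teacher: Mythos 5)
Your proposal is correct and follows essentially the same route as the paper's proof: reduce to the component $\Cc_{1,0}$ via the identity \eqref{eq_sizeCc10}, discard the bad events $\{\Cmax\nsubseteq\Ckn\}$ (Proposition \ref{prop_Cmax>Ckn}) and the failure of the identification of $\Cc_{1,0}$ as super-exponentially negligible, and then apply Theorem \ref{MDPmart} at $t_1^*=\overline{\rho_\l}N$ with the expansion $\tilde A_{t_1^*}=\beta_{t_1^*}S_{t_1^*}+\Oc(1)$. You are in fact a bit more explicit than the paper about controlling $\P(\{\Cc_{1,0}\neq\Cmax\})$ via $\omega^2\gg N^{2\a-1}$, but this is a refinement of the same argument, not a different one.
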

\begin{proof}
As implied by Proposition \ref{prop_Cmax>Ckn} we have
\begin{align*}
\P \left( \lbrace \Cmax\nsubseteq\Ckn \rbrace \right)
\leq \P\left( |\Cmax|>\Ckna \right) \leq \exp(-Ck_N).
\end{align*}
By our construction, on the event $\lbrace \Cmax\subseteq\Ckn \rbrace$, the component $\Cc_{1,0}$ exists and it satisfies
\begin{align*}
& \lim_{N \to \infty}\frac 1 {N^{2\a-1}}\log \P\left(\Ckna > y N^{\a}+ \overline{\rho_\l}N \right) \\
\geq& \lim_{N \to \infty}\frac 1 {N^{2\a-1}}\log \P\left(\Ckna > y N^{\a}+ \overline{\rho_\l}N, \,\Cmax\subseteq\Ckn \right) \\
\geq& \lim_{N \to \infty}\frac 1 {N^{2\a-1}}\log \P\left( |\Cc_{1,0}|  > y N^{\a}+ \overline{\rho_\l}N, \,\Cmax\subseteq\Ckn \right).
\end{align*}
Indeed, by our construction, the component $\Cc_{1,0}$ exists and is contained in the union of connected components $\Ckn$ on the event $\lbrace \Cmax\subseteq\Ckn \rbrace$.

Therefore, by inserting the approximation for $|\Cc_{1,0}|$ in \eqref{eq_sizeCc10} we observe
\begin{align*}
&  \P\left(\Ckna > y N^{\a}+ \overline{\rho_\l}N \right) \\
\geq &  \P\left( |\Cc_{1,0}|  > y N^{\a}+ \overline{\rho_\l}N\mid \Cmax\subseteq\Ckn \right) \P\left( \Cmax\subseteq\Ckn  \right) \\
\geq& \P\big( t^*_1 + \frac{\tilde{A}_{t^*_1}}{1-\l_*}  > y N^{\a}+ \overline{\rho_\l}N \mid \Cmax\subseteq\Ckn \big)\big( 1-\exp\left(-Ck_N\right)\big).
\end{align*}
Finally, using the notation $t^*_1 = \overline{\rho_\l}N$, we arrive at 
\begin{align*}
& \lim_{N \to \infty}\frac 1 {N^{2\a-1}}\log \P\left(\Ckna > y N^{\a}+ \overline{\rho_\l}N \right) \\
\geq& \lim_{N \to \infty}\frac 1 {N^{2\a-1}} \bigg\lbrack \log \P\left(t^*_1+\frac{\tilde{A}_{t^*_1}}{1-\l_*} > y N^{\a}+ \overline{\rho_\l}N\right) +\log \left(1-\exp(-Ck_N)\right) \bigg\rbrack\\
\geq& \lim_{N \to \infty}\frac 1 {N^{2\a-1}} \left[ \log \P\left( \frac{\beta_{t^*_1}S_{t^*_1}}{ N^{\a}}> y(1-\l_*)  \right) \right]  \\
\geq& -I\left((y\left( 1-\l_*\right) ) \right) =-J(y).
\end{align*}
\end{proof}

\begin{lemma}
In the situation and with the notation of Theorem \ref{MDPCkn} we have for any $\frac 12 < \a <1$ and  $y>0$ that
$$
\lim_{N \to \infty}\frac 1 {N^{2\a-1}}\log \P\left(\Ckna <-y N^{\a}+ \overline{\rho_\l}N\right)\ge -J(y).
$$
\end{lemma}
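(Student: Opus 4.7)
The plan is to obtain the lower bound directly through the modified exploration process, bypassing the route through $|\Cc_{1,0}|$ that was used in the previous lemma. Setting $m_y^- := -yN^\a + \overline{\rho_\l}N$, the central observation is the inclusion
$$
\{A_{m_y^- - 1}\leq 0\}\subseteq \{\Ckna < m_y^-\},
$$
which holds because the modified process $A_t = A_{t-1}+\eta_t-1$ has non-negative increments $\eta_t$ and can only step down by exactly one, so the first passage time of $A$ to a non-positive value coincides with $\Ckna = \min\{t\colon A_t=0\}$. Consequently $\P(\Ckna < m_y^-) \geq \P(A_{m_y^- - 1}\leq 0)$, and it suffices to lower bound the right-hand side on the moderate deviations scale.

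By \eqref{eq_Approx.errorbound} the error $|A_t - \tilde A_t|$ is $\Oc(t/N) = \Oc(1)$ for $t\sim\overline{\rho_\l}N$, and \eqref{eq_simMDP2} (with $x=0$) gives $\P(A_{m_y^- - 1}\leq 0)\sim \P(\tilde A_{m_y^-}\leq 0)$, the one-step difference between $m_y^-$ and $m_y^- - 1$ being negligible on our scale. Writing $\tilde A_t = x_t + \beta_t S_t$, a Taylor expansion of $g_{d,\l}$ at $\overline{\rho_\l}$ entirely analogous to the one leading to \eqref{eq_xmy+} (with $-y$ in place of $+y$) yields
$$
x_{m_y^-} = y(1-\l_*)N^\a + o(N^\a),
$$
so the event $\{\tilde A_{m_y^-}\leq 0\}$ becomes $\{\beta_{m_y^-}S_{m_y^-}/N^\a \leq -y(1-\l_*) + o(1)\}$.

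The final step is to apply Theorem \ref{MDPmart} with $\zeta=-y$ to the normalized martingale term $\beta_{m_y^-}S_{m_y^-}/N^\a$. To reconcile the closed-set event with the open-set hypothesis of the G\"artner--Ellis lower bound, I pick an arbitrary $\delta>0$, bound below by the probability of the open set $(-\infty,-(y+\delta)(1-\l_*))$, obtain the rate $-I(-(y+\delta)(1-\l_*)) = -J(y+\delta)$, and then let $\delta\to 0^+$ invoking continuity of $J$. The only mild technical obstacle is ensuring that the $o(1)$ corrections from the $A\leftrightarrow\tilde A$ approximation and from the Taylor expansion of $x_{m_y^-}$ are uniform enough to be absorbed into the $\delta$-slack; since these come from deterministic error terms of order $N^{\a-1}$, they are eventually dominated by any fixed $\delta>0$. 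Notably, in contrast to the lower bound for the upper tail, no control of the non-giant contribution $|\Ckn\setminus\Cmax|$ is needed here, which is what makes the full range $\tfrac12<\a<1$ immediately accessible without any additional restriction on $k_N$ beyond \eqref{choiceKn}.
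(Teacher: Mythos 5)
Your proof is correct and follows essentially the same route as the paper's: both reduce $\P\left(\Ckna < m_y^-\right)$ to $\P\left(A_{m_y^-}\leq 0\right)$ using the fact that the exploration process steps down by at most one per step, expand $x_{m_y^-}$ to obtain $y(1-\l_*)N^\a + o(N^\a)$, and then apply Theorem \ref{MDPmart}. Your explicit treatment of the open-set requirement in the G\"artner--Ellis lower bound via the $\delta$-slack is a technical detail the paper leaves implicit.
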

\begin{proof}
Again let $m_y^-=-y N^{\a}+\overline{\rho_\l}N$ and by \eqref{eq_xmy-} we obtain
\begin{equation*}
x_{m_y^-}
= y\left( 1-\l_*\right)N^\a + o(N^\a) + \Oc(1).
\end{equation*}
Therefore, using Theorem \ref{MDPmart} we obtain
\begin{align*}
&\lim_{N \to \infty}\frac 1 {N^{2\a-1}}\log \P\left(\Ckna < m_y^- \right)
\leq \lim_{N \to \infty}\frac 1 {N^{2\a-1}}\log \P\left(\Ckna \leq m_y^- \right) \\
=& \lim_{N \to \infty}\frac 1 {N^{2\a-1}}\log \P\left(\exists m\leq m_y^-: A_m=0 \right) \\
\geq& \lim_{N \to \infty}\frac 1 {N^{2\a-1}}\log  \P\left(A_{m_y^-} \leq 0 \right) \\
\sim& \lim_{N \to \infty}\frac 1 {N^{2\a-1}}\log \P\left( \frac{\beta_{m_y^-} S_{m_y^-}}{N^\a} \leq  -\frac{x_{m_y^-}}{N^\a}   \right)\\
=& \lim_{N \to \infty}\frac 1 {N^{2\a-1}}\log \P\left( \frac{\beta_{m_\delta}S_{m_\delta}}{N^\a } \leq -y \left(1-\l*\right)+o(1) \right)\\
=& -I\left(-y\left( 1-\l_*\right) \right) = -J(y).
\end{align*}
\end{proof}

\section{Proof of Theorem \ref{main_theorem}\label{sec_final}: a moderate deviations principle for $|\Cmax|$}
\subsection{Compare $|\Cmax|$ and $\Ckna$}
In this subsection we improve Proposition \ref{prop_Cmax>Ckn}. We show, that if we allow an error term $r_N=o(N^\a)$ we obtain a bound on the probability of the event $ |\Cmax|+r_N< \Ckna$ that is negligible on the moderate devation scale. Hence, the upper bound of $k_N$ given by \eqref{choiceKn} can be obtained.

We remind the reader of $k_N=N^\gamma$ for $2\a-1<\gamma<\a$ , where $\frac 12 < \a <1$ is chosen.
\begin{lemma}
\label{lem_errorESTrn}
Let $r_N=N^\xi$ for $\gamma<\xi<\a$. Then
\begin{equation*}
 \P\left( |\Cmax|+r_N< \Ckna\right)\leq \exp\left(-Mk_N \right) + \exp\left( o( N^{2\a-1}) \right)
\end{equation*}
for some constant $M>0$ small enough.
\end{lemma}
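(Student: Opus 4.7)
The plan is to split the event according to whether the giant component is contained in $\Ckn$ or not:
\begin{equation*}
\P(|\Cmax|+r_N<\Ckna)\leq \P(\Cmax\not\subseteq\Ckn)+\P\bigl(|\Cmax|+r_N<\Ckna,\ \Cmax\subseteq\Ckn\bigr).
\end{equation*}
The event $\{\Cmax\not\subseteq\Ckn\}$ coincides with the event that none of the first $k_N$ vertices lies in $\Cmax$, since $\Ckn$ is by construction the union of all components containing at least one of $\{1,\dots,k_N\}$. Hence the very same chain of inequalities used in the proof of Proposition \ref{prop_Cmax>Ckn} gives $\P(\Cmax\not\subseteq\Ckn)\leq\exp(-Mk_N)$ for some $M>0$, which supplies the first summand of the claimed bound.

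For the second term the key step is a deterministic structural inequality. On $\{\Cmax\subseteq\Ckn\}$, the difference $\Ckn\setminus\Cmax$ is a disjoint union of connected components of $\Gc^d(N,p)$, each different from $\Cmax$ and each containing at least one vertex of $\{1,\dots,k_N\}\setminus\Cmax$. There are therefore at most $k_N$ of them, and every such component has size bounded by $|\Cs|$; consequently
\begin{equation*}
\Ckna-|\Cmax|\leq k_N\,|\Cs|.
\end{equation*}
Thus $\{|\Cmax|+r_N<\Ckna,\ \Cmax\subseteq\Ckn\}\subseteq\{|\Cs|>r_N/k_N\}=\{|\Cs|>N^{\xi-\gamma}\}$.

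Next I would invoke the second part of Theorem \ref{thm_largeDev} with $L:=N^{\xi-\gamma}$. Since $0<\xi-\gamma<1-\alpha$ one has $L=o(N)$, and the assumption $\epsilon^3 N^\tau\to\infty$ from \eqref{cond:tau} is used to check $L=\Oc(\epsilon N)$ and $\epsilon^2 L\to\infty$. Theorem \ref{thm_largeDev} then yields, after absorbing the polynomial prefactor into the exponential,
\begin{equation*}
\P(|\Cs|>N^{\xi-\gamma})\leq C\,\frac{\epsilon N}{N^{\xi-\gamma}}\exp\bigl(-\epsilon^2 N^{\xi-\gamma}/C\bigr)\leq \exp\bigl(-c\,\epsilon^2 N^{\xi-\gamma}\bigr)
\end{equation*}
for $N$ large.

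The main obstacle is to ensure that the exponent $\epsilon^2 N^{\xi-\gamma}$ actually dominates the moderate deviation scale $N^{2\alpha-1}$, so that the resulting bound is negligible at that scale (as the second term in the statement requires). This is done by exploiting the admissible range $2\alpha-1<\gamma<\xi<\alpha$: taking $\gamma$ close to $2\alpha-1$ and $\xi$ close to $\alpha$ makes $\xi-\gamma$ approach $1-\alpha$, and then the condition $\epsilon^3 N^\tau\to\infty$ with $\tau=\min\{1/2,2-2\alpha-\iota\}$ forces $\epsilon^2 N^{\xi-\gamma-(2\alpha-1)}\to\infty$. This delivers the second term of the bound and completes the argument.
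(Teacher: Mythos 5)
Your reduction of the first summand to Proposition \ref{prop_Cmax>Ckn} is fine, and the deterministic inequality $\Ckna-|\Cmax|\leq k_N|\Cs|$ on $\{\Cmax\subseteq\Ckn\}$ is correct. The gap is in the second half: that inequality is too lossy by exactly a factor of $k_N$ in the exponent, and the resulting bound does not close. After your reduction you must control $\P(|\Cs|>N^{\xi-\gamma})$, and since \eqref{choiceKn} forces $2\a-1<\gamma<\xi<\a$, you always have $\xi-\gamma<1-\a$. For $\a\geq 2/3$ one has $1-\a\leq 2\a-1$, so even with $\epsilon$ bounded away from $0$ the bound $\exp\bigl(-c\epsilon^2N^{\xi-\gamma}\bigr)$ satisfies $N^{-(2\a-1)}\log\exp\bigl(-c\epsilon^2N^{\xi-\gamma}\bigr)\to 0$ rather than $-\infty$, i.e.\ it is \emph{not} negligible on the moderate deviation scale. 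When $\epsilon\to 0$ at the slowest admissible rate matters are worse: for instance with $\a=0.9$ one has $\tau=0.2-\iota$, so $\epsilon$ may be of order $N^{-0.066}$ while $L=N^{\xi-\gamma}\leq N^{0.1}$, and then $\epsilon^2L\to 0$, so the hypothesis $\epsilon^2L\to\infty$ of the second part of Theorem \ref{thm_largeDev} fails and \eqref{equa_second} cannot even be invoked. Your concluding claim that $\epsilon^3N^\tau\to\infty$ ``forces $\epsilon^2N^{\xi-\gamma-(2\a-1)}\to\infty$'' is false: the exponent $\xi-\gamma-(2\a-1)<2-3\a$ is negative for every $\a>2/3$ while $\epsilon=\Oc(1)$, so that quantity tends to $0$.

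The paper avoids this loss by not passing through $|\Cs|$ alone. Writing $l_2\geq\dots\geq l_{k_N}$ for the sizes of the next $k_N-1$ largest components, it uses the bound $\Ckna\leq|\Cmax|+\sum_{i=2}^{k_N}l_i$ (valid always, so no split on $\Cmax\subseteq\Ckn$ is needed) and estimates $\P\bigl(\sum_{i=2}^{k_N}l_i>r_N,\,|\Cmax|>\delta N\bigr)$ by summing over the possible size vectors $(a_2,\dots,a_{k_N})$ and conditioning successively: after deleting $\Cmax,L_2,\dots,L_{j-1}$ the remaining hypergraph is subcritical, whence $\P(l_j\geq a_j\mid\cdots)\leq C_\delta\frac{\epsilon N}{a_j}\exp(-c_\delta\epsilon^2a_j)$, and the \emph{product} over $j$ yields $\exp\bigl(-c_\delta\epsilon^2\sum_ja_j\bigr)\leq\exp\bigl(-c_\delta\epsilon^2r_N\bigr)$ up to a combinatorial prefactor $\exp\bigl(k_N\log(C_\delta\epsilon r_NN)\bigr)$. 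The exponent obtained is thus $\epsilon^2N^{\xi}$ with $\xi>\gamma>2\a-1$, rather than your $\epsilon^2N^{\xi-\gamma}$; the extra factor $k_N=N^\gamma$ comes precisely from the fact that for the union to exceed $r_N$ many components must be large simultaneously, and the corresponding events multiply. To repair your argument you would need this multiplicativity; the single tail bound for the second largest component cannot deliver it.
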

\begin{proof}
Let us denote the $i$-th largest component of $\Gc^d(N,p)$ by $L_i$, whose size is given by $l_i=|L_i|$. Then, because $\Ckna$ can at most be as large as the union of the $k_N$ largest components, we get for each $\delta>0$ that
\begin{equation*}
\begin{split}
&\P\left( |\Cmax|+r_N< \Ckna\right)\\
\leq& \P\left( |\Cmax|+r_N< |\Cmax|+\big| \bigcup_{i=2}^{k_N} L_i\big| \right)\\
\leq& \P\left( r_N<\big| \bigcup_{j=2}^{k_N} L_i\big|,\, |\Cmax|> \delta N \right) + \P\left( |\Cmax|\leq \delta N \right)\\
\leq &\sum_{\substack{a_2>\dots>a_{k_N}\\ a_2+\dots+a_{k_N}> r_N}}\P( l_2=a_2,\,\dots,\, l_{k_N}=a_{k_N},\,|\Cmax|>\delta N ) + \P\left( |\Cmax|\leq \delta N \right).
\end{split}
\end{equation*}

 
Define $\delta N:=\overline{\rho_\l}N-\epsilon N$ with $\vep$ again as defined in Theorem \ref{main_theorem}. We obtain by Theorem \ref{thm_largeDev} with $\omega= \sqrt{\epsilon^3 N}$ that
\begin{equation}\label{eq_CmaxDelta}
\begin{split}
\P\left( |\Cmax|\leq \delta N \right)
\leq \P\left( ||\Cmax|-\overline{\rho_\l}N|\geq  \omega \sqrt{N/\epsilon} \right)
\leq \exp\left(-c \epsilon^3 N \right),
\end{split}
\end{equation}
where $c>0$ is some constant.
Recall the conditions $\epsilon=\mathcal{O}(1)$ as well as $ \epsilon^3 N^\tau\to\infty$ with $\tau=\min\lbrace \frac{1}{2}, 2-2\a-\iota \rbrace$, for a fixed small $\iota>0$ given in Theorem \ref{main_theorem}. We obtain 
\begin{equation}\label{eq_CmaxdeltaN}
\begin{split}
\frac{1}{N^{2\a-1}}\log\P\left( |\Cmax|\leq \delta N \right)
\leq& \frac{1}{N^{2\a-1}} \log \exp\left(- c\epsilon^3 N \right) \\
\leq& -\frac{c\epsilon^3 N }{N^{2\a-1}}
\overset{N\to\infty}{\longrightarrow} -\infty.
\end{split}
\end{equation}

Moreover, note that for fixed $a_2>\dots>a_{k_N}$ with $a_2+\dots+a_{k_N}> r_N$,  it holds
\begin{equation*}
\begin{split}
&\P( l_2=a_2,\,\dots,\, l_{k_N}=a_{k_N},\, |\Cmax|>\delta N )\\
=& \prod_{j=2}^{k_N} \P\left( l_j= a_j\mid l_2=a_2,\,\dots,\, l_{j-1}=a_{j-1},\, |\Cmax|>\delta N \right)\,\P(|\Cmax|>\delta N)\\
\leq& \prod_{j=3}^{k_N} \P\left( l_j= a_j\mid l_2=a_2,\,\dots,\, l_{j-1}=a_{j-1},\, |\Cmax|>\delta N \right)\, \P\left( \lbrace l_2\geq a_2\rbrace  \cap\lbrace |\Cmax|>\delta N\rbrace \right)\\
\leq& \prod_{j=3}^{k_N} \P\left( l_j\geq a_j\mid l_2=a_2,\,\dots,\, l_{j-1}=a_{j-1},\, |\Cmax|>\delta N \right)\, \P(l_2\geq a_2).
\end{split}
\end{equation*}

On one hand, we obtain from \eqref{equa_second} that for some constant $c,C>0$ 
$$
\P(l_2>a_2)\leq C\frac{\epsilon N}{a_2}\exp\left( -c\epsilon^2 a_2\right),
$$
where the $\epsilon$ was given by the branching factor $\l=1+\epsilon$ of the original hypergraph  $\Gc^d(N,p)$.

On the other hand, for each $j\in\lbrace 3,\dots,k_N\rbrace$, the hypergraph $\Gc^d(N,p)$ after removing the components $L_2, \dots ,L_{j-1}$ and $\Cmax$, conditioned on $\lbrace |\Cmax|>\delta N\rbrace$ by \cite[Lemma 8.1]{bollobas_riordan3} (or common sense)  is again a hypergraph $\Gc^d(N-s_j, p)$, where $s_j\leq (1-\delta)N$. Recall that $$p=\frac{\l (d-2)!}{N^{d-1}}=\frac{\l_j(r-2)!}{(N-s_j)^{d-1}}$$ in $\Gc^d(N-s_j, p)$ is characterised by its branching factor 
$$
\l_j=\left(1-\frac{s_j}{N}\right)^{d-1}\l,
$$
where we have $\l=1+\epsilon$ by assumptions. 
Now let us denote by $\vep_j$ the following quantity: 
$$
\epsilon_j =1-\left(1-\frac{s_j}{N} \right)^{d-1}(1+\epsilon).
$$
Then we can arrive at 
\begin{equation}\label{eq_lambdaj}
\l_j=1-\epsilon_j.
\end{equation}

Since $s_j\leq(1-\delta)N$, we see by \cite[(8.3)]{bollobas_riordan3} that
\begin{align*}
&\epsilon_j \leq 1-\delta^{d-1}(1+\epsilon), \\
&c_\delta \epsilon\leq \epsilon_j\leq C_\delta \epsilon
\end{align*}
for some constanst $c_\delta, C_\delta>0$ that depend on $\delta$ but not on $j$. 

Therefore, the branching factor $\l_j$ given in \eqref{eq_lambdaj} of the new hypergraph $\Gc^d(N-s_j,p)$, belongs to the subcritical regime. From \cite[Theorem 2]{bollobas_riordan3} we obtain a large deviation bound for the largest component in $\Gc^d(N-s_j,p)$, that is to say, the $L_j$ of $\Gc^d(N,p)$. We thus obtain
\begin{equation*}
\begin{split}
&\P\left( l_j \geq a_j\mid l_2=a_2,\,\dots,\, l_{j-1}=a_{j-1},\, |\Cmax|>\delta N \right)\\
\leq& C\frac{\epsilon_j N}{a_j}\exp\left( -c\epsilon_j^2 a_j\right)\\
\leq& C_\delta\frac{\epsilon N}{a_j}\exp\left( -c_\delta\epsilon^2 a_j\right).
\end{split}
\end{equation*}

Hence, we get fianally for $M>0$ small enough
\begin{equation}\label{eq_kthlargest}
\begin{split}
& \sum_{\substack{a_2>\dots>a_{k_N}\\ a_2+\dots+a_{k_N}> r_N}}\P( l_2=a_2,\,\dots,\, l_{k_N}=a_{k_N},\,|\Cmax|>\delta N )\\ 
\leq&\,  r_N^{k_N}\frac{ (C_\delta\epsilon N)^{ k_N}}{\prod_{j=2}^{k_N}a_j}\exp\left( -c_\delta\epsilon^2 \sum_{j=2}^{k_N}a_j\right)\\
\leq & 
\exp\left( -c_\delta\epsilon^2 r_N + k_N\log (C_\delta \epsilon r_N N)\right)\\
\leq& \exp\left(-Mk_N \right).
\end{split}
\end{equation}
The last step follows, since not only the fact $k_N=o(r_N)$ is implied by $r_N=N^\xi$ with $\gamma<\xi<\a$ and $k_N=N^\gamma$ with $2\a-1< \gamma<\a$ in \eqref{choiceKn}; but also $k_N=o(N)$ holds.
Hence, \eqref{eq_kthlargest} together with \eqref{eq_CmaxDelta}, \eqref{eq_CmaxdeltaN} yields 
\begin{equation*}
 \P\left( |\Cmax|+r_N< \Ckna\right)\leq \exp\left(-Mk_N \right) + { \exp\left( o( N^{2\a-1}) \right)}
\end{equation*}
by adjusting the constant $M>0$.
\end{proof}

\subsection{Proof of Theorem \ref{main_theorem}}
As we described before, by an appropriate choice of $k_N$, $\Ckna$ and $|\Cmax|$ only differ by an amount that is negligible on the moderate deviations scale.
\begin{proof}
Note that we pick $r_N=N^\xi$ for $\gamma<\xi<\a$, in particular, it satisfies $r_N=o\left(N^\a \right)$. Now for all $y>0$, we estimate the upper tail by applying Lemma \ref{lem_errorESTrn} with $M$ given there to obtain 
\begin{align*}
 & \P\left( ||\Cmax|-\overline{\rho_\l}| >y N^{\a}\right)\\
=& \P\left( ||\Cmax|-\overline{\rho_\l}| >y N^{\a},\, ||\Cmax|-\Ckna|\leq r_N \right)\\
& +\P\left( ||\Cmax|-\overline{\rho_\l}| >y N^{\a},\, ||\Cmax|-\Ckna|> r_N \right) \\
\leq& \P\left( |\Ckna-\overline{\rho_\l}| >y N^{\a} + o\left(N^\a \right)\right) +  \exp\left(-Mk_N \right) +\exp\left( o( N^{2\a-1}) \right).
\end{align*}
It suffices to apply the MDP for $\Ckna$ derived in Theorem \ref{MDPCkn}, it yields
\begin{align*}
& \lim_{N \to \infty}\frac 1 {N^{2\a-1}}\log \P\left( ||\Cmax|-\overline{\rho_\l}| > y N^{\a}\right)\\
\leq& \lim_{N \to \infty}\frac 1 {N^{2\a-1}}\log \Big\lbrack \P\left( |\Ckna-\overline{\rho_\l}| > y N^{\a} + o\left(N^\a \right)\right) + \exp\left(-Mk_N \right) \Big\rbrack\\
\leq& \lim_{N \to \infty}\frac 1 {N^{2\a-1}}\log\max \Big\lbrace \P\left( |\Ckna-\overline{\rho_\l}| > y N^{\a}+ o\left(N^\a \right)\right),\, \exp\left(-Mk_N \right) \Big\rbrace\\
\leq& -J(y).
\end{align*}
Similarly, for the lower tail, it satisfies for all $y>0$ that
\begin{equation*}
\begin{split}
& \P\left( |\Ckna - \overline{\rho_\l}| >y N^{\a}\right)\\
=& \P\left( |\Ckna-\overline{\rho_\l}| >y N^{\a},\, ||\Cmax|-\Ckna|\leq r_N \right)\\
& +\P\left( |\Ckna - \overline{\rho_\l}| >y N^{\a},\, ||\Cmax|-\Ckna|> r_N \right) \\
\leq& \P\left( ||\Cmax|-\overline{\rho_\l}| >y N^{\a} + o\left(N^\a \right)\right) +  \exp\left(-Mk_N \right) +\exp\left( o( N^{2\a-1}) \right).
\end{split}
\end{equation*}
Again by Theorem \ref{MDPCkn}, we arrive at
\begin{equation*}
\begin{split}
-J(y)=& \lim_{N \to \infty}\frac 1 {N^{2\a-1}}\log \P\left( |\Ckna - \overline{\rho_\l}| >y N^{\a}\right)\\
\leq& \lim_{N \to \infty}\frac 1 {N^{2\a-1}}\log\max \Big\lbrace \P\left( ||\Cmax|-\overline{\rho_\l}| >y N^{\a} + o\left(N^\a \right)\right) ,\, \exp\left(-Mk_N \right) \Big\rbrace \\
\leq& \lim_{N \to \infty}\frac 1 {N^{2\a-1}}\log \P\left( ||\Cmax|-\overline{\rho_\l}| >y N^{\a}\right) .
\end{split}
\end{equation*}
Altogether, the claim
$$
\lim_{N \to \infty}\frac 1 {N^{2\a-1}}\log \P\left( ||\Cmax|-\overline{\rho_\l}| >y N^{\a} \right) =-J(y)
$$
follows for all $y>0$.
\end{proof}

\section*{Acknowledgements}
Research of the authors was funded by the Deutsche Forschungsgemeinschaft (DFG, German Research Foundation) under Germany's Excellence Strategy EXC 2044 -390685587, Mathematics M\"unster: Dynamics -Geometry -Structure.

\bibliographystyle{alpha}
\bibliography{LiteraturDatenbank}
\end{document}